\numberwithin{equation}{section}
\definecolor{cadmiumgreen}{rgb}{0.0, 0.42, 0.24}
\newcommand{\xqedhere}[2]{
  \rlap{\hbox to#1{\hfil\llap{\ensuremath{#2}}}}}
 \theoremstyle{plain}
 \newtheorem{thm}{Theorem}[section]
  \theoremstyle{definition}
  \newtheorem{defn}[thm]{Definition}
  \theoremstyle{definition}
  \theoremstyle{plain}
  \newtheorem{lem}[thm]{Lemma}
  \theoremstyle{plain}
  \newtheorem{cor}[thm]{Corollary}
  \theoremstyle{remark}
  \newtheorem{rem}[thm]{Remark}
  \theoremstyle{plain}
  \newtheorem{prop}[thm]{Proposition}
  \theoremstyle{plain}
  \theoremstyle{remark}
  \theoremstyle{remark}
  \theoremstyle{plain}
  \theoremstyle{definition}
  \numberwithin{equation}{section}
\DeclareMathOperator{\LC}{LC}
\DeclareMathOperator{\PP}{PP}
\DeclareMathOperator{\LPP}{LPP}
\DeclareMathOperator{\Hom}{Hom}
\DeclareMathOperator{\Star}{S}
\DeclareMathOperator{\relint}{relint}
\DeclareMathOperator{\Lin}{Span}
\DeclareMathOperator{\lineal}{lineal}
\DeclareMathOperator{\Mink}{MW}
\DeclareMathOperator{\Aff}{Aff}
\renewcommand{\injlim}{\varinjlim}
\newcommand{\R}{{\mathds R}}
\newcommand{\Q}{{\mathds Q}}
\newcommand{\Z}{{\mathds Z}}
\newcommand{\N}{{\mathds N}}
\newcommand{\M}{{\mathrm M}}
\newcommand{\mC}{{\mathcal C}}
\newcommand{\mZ}{{\mathcal Z}}
\newcommand{\mPP}{{\mathcal{PP}}}
\newcommand{\mLPP}{{\mathcal{LPP}}}
\begin{document}

\title{Cycles, cocycles, and duality on tropical manifolds}

\author{Andreas Gross}
\address{Colorado State University \\ Fort Collins, CO 80523}
\email{\href{mailto:andreas.gross@colostate.edu}{andreas.gross@colostate.edu}}

\author{Farbod Shokrieh}
\address{University of Washington \\ Seattle, WA 98195}
\email{\href{mailto:farbod@uw.edu}{farbod@uw.edu}}



\subjclass[2020]{
\href{https://mathscinet.ams.org/msc/msc2020.html?t=14T10}{14T10},
\href{https://mathscinet.ams.org/msc/msc2020.html?t=14C17}{14C17},
\href{https://mathscinet.ams.org/msc/msc2020.html?t=05B35}{05B35},
\href{https://mathscinet.ams.org/msc/msc2020.html?t=52B99}{52B99}
}


\begin{abstract}
We prove a Poincar\'e duality for the Chow rings of smooth fans whose support are tropical linear spaces. As a consequence, we show that cycles and cocycles on tropical manifolds are Poincar\'e dual to each other. This allows us to define pull-backs of tropical cycles along arbitrary morphisms with smooth target.
\end{abstract}

\maketitle


\section{Introduction}
\renewcommand*{\thethm}{\Alph{thm}}

Much of the development of tropical geometry and tropical intersection theory has been based on two theorems proven in \cite{FMSS95}. Theorem~1 in \cite{FMSS95} gives an explicit expression of the Chow ring $A^*(X_{\Sigma})$ of a smooth toric variety $X_{\Sigma}$ in term of its fan $\Sigma$, and Theorem~3 in \cite{FMSS95} shows that $A^*(X_{\Sigma})$ is a Poincar\'e duality ring of dimension $\dim X_{\Sigma}$ if $X_{\Sigma}$ is complete. Together, these theorems have led to the description of $A^*(X_{\Sigma})$ via Minkowski weights given in \cite{FS97} that has been vital in the definition of the tropicalization map \cite{Tev07,ST08,Katz09} and in tropical intersection theory \cite{FS97,AR10,Katz12}. 

If $X_{\Sigma}$ is not complete, the combinatorial expression for $A^*(X_{\Sigma})$ in \cite{FMSS95} still applies, but it is generally not a Poincar\'e duality ring and cannot be described by tropical cycles on $\Sigma$. 

Our first main result states that, in the case where the support $\vert\Sigma\vert$ of $\Sigma$ is a tropical linear space, the ring $A^*(X_{\Sigma})$ is a Poincar\'e duality ring of dimension $\dim(\Sigma)$. 

\begin{thm}[= Theorem {\ref{thm:duality for fans2}}]
\label{thm:duality for fans}
Let $\Sigma$ be a smooth fan in $N_\R$ such that $|\Sigma|$ is a tropical linear space. Then the Chow ring $A^*(X_{\Sigma})$ of its associated toric variety $X_\Sigma$ is a Poincar\'e duality ring.
\end{thm}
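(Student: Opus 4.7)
The plan is to reduce Poincar\'e duality for an arbitrary smooth fan $\Sigma$ with tropical linear space support to the case of a specific matroidal smooth fan structure, and then to invoke the matroidal Poincar\'e duality theorem of Adiprasito--Huh--Katz. Any tropical linear space of dimension $d$ in $N_\R$ coincides with the support $|\Sigma_M|$ of a Bergman fan of some loopless matroid $M$, and any two smooth fans with this common support are connected by a sequence of smooth star subdivisions and their inverses, each internal to $|\Sigma|$ (by weak factorization). It therefore suffices to (i) establish Poincar\'e duality for one distinguished smooth matroidal fan, and (ii) show that the property is preserved under smooth star subdivisions along rays inside a tropical linear space.

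For step (i) I would work with the ``fine'' fan structure $\Sigma_M$ on $|\Sigma|$ coming from the order complex of the lattice of proper flats of $M$, which is smooth and unimodular. Using the presentation of \cite{FMSS95}*{Theorem 1}, the ring $A^*(X_{\Sigma_M})$ is identified with the Feichtner--Yuzvinsky Chow ring of $M$, and the fundamental class in top degree corresponds to the canonical top Minkowski weight on $|\Sigma_M|$. The theorem of Adiprasito--Huh--Katz then gives perfection of the pairing
\[
A^k(X_{\Sigma_M}) \otimes A^{d-k}(X_{\Sigma_M}) \longrightarrow A^d(X_{\Sigma_M}) \xrightarrow{\deg} \Z.
\]

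For step (ii), let $\Sigma' \to \Sigma$ be a smooth star subdivision along a new ray $\rho_0 \subset |\Sigma|$. The toric blow-up formula yields a decomposition
\[
A^*(X_{\Sigma'}) \;\cong\; A^*(X_\Sigma) \,\oplus\, E^*,
\]
where $E^*$ is controlled by the Chow ring of the star of $\rho_0$ in $\Sigma'$. Crucially, because $|\Sigma|$ is a tropical linear space, this star inherits matroidal support (associated to a minor of $M$), so by induction on $d$ its Chow ring is a Poincar\'e duality ring, and hence $E^*$ carries its own perfect self-pairing. Combining the inductive dualities on $A^*(X_\Sigma)$ and on $E^*$ and checking orthogonality of the decomposition for the pairing induced by the top Minkowski weight $[\Sigma']$, one transfers Poincar\'e duality from $\Sigma$ to $\Sigma'$; the reverse direction goes through the same way.

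The main obstacle I anticipate is this last verification in step (ii): showing both that the blow-up decomposition is orthogonal for the Poincar\'e pairing on $A^*(X_{\Sigma'})$ and that the top Minkowski weights on $\Sigma$ and $\Sigma'$ correspond under push-forward along the subdivision map. This is precisely where the tropical linear space hypothesis is essential: it guarantees both that the star of every ray inside $|\Sigma|$ is again matroidal (so the induction applies) and that the balancing conditions on Minkowski weights restrict and extend correctly across the subdivision. For a general smooth non-complete fan, neither holds, which is exactly why Poincar\'e duality fails without the tropical linear space assumption.
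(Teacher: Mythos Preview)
Your approach is essentially the paper's own proof: the AHK result for the fine subdivision $\Sigma_M$ as anchor, weak factorization to connect an arbitrary smooth $\Sigma$ to $\Sigma_M$, the blow-up decomposition of Chow rings (the paper's Proposition~\ref{prop:blow-up is duality ring}) to propagate Poincar\'e duality across each stellar move, and induction on $\dim\Sigma$ applied to the star, which remains a tropical linear space by Lemma~\ref{lem:star of linear is linear}. One small correction: a stellar subdivision is at a cone $\sigma$ of dimension $c>1$, and the extra summand in the blow-up formula is $\bigoplus_{i=1}^{c-1}A^{*-i}(V(\sigma))$, governed by $\Star_\Sigma(\sigma)$ (the center) rather than the star of the new ray $\rho_0$ in $\Sigma'$; the orthogonality and top-weight compatibility you flag as the main obstacle are exactly what the paper verifies in Proposition~\ref{prop:stellar subdivisions}.
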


A similar result is proven for the {\em rational} Chow ring  in \cite[Theorem 3.5]{Adiprasito}, where the proof relies on the topological properties of the tropical linear space $\vert\Sigma\vert$. For our application in tropical intersection theory, the rational case is not enough, since tropical cycles have integer weights. The case of integer coefficient is at the heart of the combinatorial Hodge theory developed in \cite{AHK15}, where it has been shown that $A^*(X_{\Sigma})$ has Poincar\'e duality in the special case that $\Sigma$ is the `fine subdivision' on a tropical linear space (see also \cite{bes} for a non-inductive proof). Our proof of Theorem~\ref{thm:duality for fans} uses, among other things, this special case as well as the weak factorization theorem for toric varieties.

\medskip

A different description of $A^*(X_{\Sigma})$ in terms of piecewise polynomial functions has been given in \cite{Brion96}. This has led to the concept of tropical cocycles in \cite{KP08,F11}. For every boundaryless rational polyhedral space $X$, the ring of tropical cocycles $C^*(X)$ acts on the group of tropical cycles $Z_*(X)$ via the intersection pairing
\[
C^*(X)\times Z_*(X)\to Z_*(X), \;\; (c,\alpha)\mapsto c\cdot \alpha \ .
\]

If $X$ is a tropical manifold, then $Z_*(X)$ can also be endowed with a ring structure using the tropical intersection product (see \cite{FR10,Shaw13}). In this case, it has long been expected that intersecting tropical cocycles with the {\em fundamental cycle} $[X]$, the unity in $Z_*(X)$, defines an isomorphism of rings. Our second main results confirms this expectation. 

\begin{thm}[= Theorem \ref{thm:duality for manifolds2}]
\label{thm:duality for manifolds}
Let $X$ be a boundaryless tropical manifold. Then the morphism
\[
C^*(X)\to Z_*(X),\;\; \alpha\mapsto \alpha\cdot[X] 
\]
is a an isomorphism of rings.
\end{thm}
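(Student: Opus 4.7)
The plan is to reduce Theorem~\ref{thm:duality for manifolds} to the fan-level statement of Theorem~\ref{thm:duality for fans} via a local-to-global argument. Since $X$ is a tropical manifold, the star fan at every point of $X$ is a smooth fan whose support is a tropical linear space, so Theorem~\ref{thm:duality for fans} gives Poincar\'e duality locally. Both $C^*(X)$ and $Z_*(X)$ admit descriptions in terms of these local fans, and the operation $\alpha\mapsto\alpha\cdot[X]$ is itself local, so matching the local statements should yield the desired global isomorphism.

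More concretely, I would fix a rational polyhedral structure on $X$ and, for each face $\sigma$, write $\Sigma_\sigma$ for the star fan of $X$ at $\sigma$. By the description of tropical cocycles via piecewise polynomial functions (\cite{KP08,F11}, together with Brion's theorem), a tropical cocycle on $X$ is a compatible family of elements of the rings $A^*(X_{\Sigma_\sigma})$. On the cycle side, Theorem~\ref{thm:duality for fans} together with the identification of top-dimensional Chow classes with Minkowski weights packages the local data so that a tropical cycle on $X$ becomes a compatible family of classes in the Poincar\'e-dual degrees. The map $\alpha\mapsto\alpha\cdot[X]$ restricts, at each cell $\sigma$, to the Poincar\'e duality isomorphism of Theorem~\ref{thm:duality for fans} applied to $X_{\Sigma_\sigma}$, so assembling these cellwise isomorphisms gives the global bijection of graded abelian groups.

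To promote this to a ring isomorphism, I would verify that the cup product on $C^*(X)$ and the tropical intersection product on $Z_*(X)$ (defined in \cite{FR10,Shaw13}) both localize to the multiplication on $A^*(X_{\Sigma_\sigma})$. This compatibility is a statement on a single star fan and follows from the fact that the map $A^*(X_{\Sigma_\sigma})\to\Mink_*(\Sigma_\sigma)$ induced by cap product with the fundamental class is a ring map, which is part of Poincar\'e duality in Theorem~\ref{thm:duality for fans}.

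The hardest part will be the bookkeeping needed to make the local-to-global descriptions precise: cocycles restrict contravariantly along face inclusions while tropical cycles push forward covariantly, so one has to present $C^*(X)$ and $Z_*(X)$ as (co)limits of the same diagram of star-fan data in order to glue together the isomorphisms from Theorem~\ref{thm:duality for fans}. Reconciling the cocycle conventions of \cite{KP08,F11} with the intersection-theoretic conventions of \cite{FR10,Shaw13} on a tropical manifold that is not globally a fan is likely the most technical step.
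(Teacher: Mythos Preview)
Your overall strategy---reduce to the local fan case via Theorem~\ref{thm:duality for fans} and then glue---is exactly what the paper does. The main difference is in the mechanism: rather than fixing a polyhedral structure and working cell-by-cell with star fans, the paper packages both sides as sheaves $\mathcal C^*_X$ and $(\mathcal Z_X)_*$ and observes that $\alpha\mapsto\alpha\cdot[X]$ is induced by a morphism of sheaves. An isomorphism of sheaves can be checked on stalks, and the stalk at $x\in X$ is identified with the map
\[
\PP^*(\LC_xX)/\LPP(\LC_xX)\longrightarrow Z^{\mathrm{fan}}_*(\LC_xX),\qquad \alpha\mapsto\alpha\cdot[\LC_xX],
\]
which the paper shows to be an isomorphism (Lemma~\ref{lem:local isomorphism}) by applying Theorem~\ref{thm:duality for fans} to every smooth fan structure on $\LC_xX$ and passing to the direct limit. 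This sidesteps entirely the ``bookkeeping'' you flag as the hardest part: there is no diagram of star-fan data to reconcile, because the sheaf formalism handles the gluing automatically.

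There is one genuine gap in your sketch, concerning the ring structure. You assert that the cap-product map $A^*(X_{\Sigma_\sigma})\to\Mink_*(\Sigma_\sigma)$ being a ring homomorphism ``is part of Poincar\'e duality in Theorem~\ref{thm:duality for fans}'', but that theorem only says that $A^*(X_\Sigma)$ is a Poincar\'e duality ring, i.e.\ that the multiplication pairing is perfect; it says nothing about compatibility with the \emph{tropical intersection product} on $\Mink_*(\Sigma)$, which is defined independently in \cite{FR10,Shaw13} and is not a priori the ring structure transported from $A^*(X_\Sigma)$. The paper supplies this missing step separately: since $\mathcal C^*_X$ is generated in degree~$1$, it suffices to check that intersecting with a piecewise linear function commutes with the tropical intersection product, and this is \cite[Theorem~4.5]{FR10}.
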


In \cite{F11} this was shown to be true in some special cases: when the underlying space of $X$ is a topological manifold, and without additional assumptions on $X$ in the codimension-$1$ and codimension-$\dim X$ graded pieces of the rings.

Note that our exposition of tropical cocycles in \S\ref{subsec:cocycles on manifolds} also fixes the inaccuracies 	in their development in \cite{F11} (see Remark \ref{rem:inconsitence in definition of cocycles}).

The isomorphism of Theorem \ref{thm:duality for manifolds} is an important step in the development of tropical intersection theory. Namely, given a morphism $X\to Y$ from a rational polyhedral space $X$ to a tropical manifold $Y$ (both without boundary) and a cycle $A\in Z_*(X)$, one can use cocycles to canonically define a pull-back $Z_*(X)\to Z_*(\vert A\vert)$. Previously, the existence of the pull-back was only known in the special case where both $X$ and $Y$ are tropical manifolds \cite{FR10}.

\begin{cor}[= Corollary \ref{cor:pullbacks of tropical cycles}]
\label{cor:pullbacks of tropical cycles2}
Let $f\colon X\to Y$ be a morphism from a boundaryless rational polyhedral space $X$ into a boundaryless tropical manifold $Y$, and let $A\in Z_*(X)$. Then there exists a unique pull-back morphism
\[
f^*\colon Z_*(Y)\to Z_*(X)
\]
that satisfies $f^*[Y]=A$ and
\[
f^*(c\cdot B)= f^*(c)\cdot f^*(B)
\]
for every $c\in C^*(Y)$ and $B\in Z_*(Y)$.
\end{cor}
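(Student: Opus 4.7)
The plan is to let Theorem \ref{thm:duality for manifolds} do the heavy lifting. By that theorem, the map $C^*(Y)\to Z_*(Y)$, $c\mapsto c\cdot[Y]$, is a ring isomorphism, so every $B\in Z_*(Y)$ is of the form $c_B\cdot[Y]$ for a unique cocycle $c_B\in C^*(Y)$. Since cocycles pull back along arbitrary morphisms of rational polyhedral spaces (no smoothness assumption on the source is needed), and since $C^*(X)$ acts on $Z_*(X)$ via the cocycle--cycle intersection pairing, the assignment
\[
f^*(B) := f^*(c_B)\cdot A
\]
makes sense in $Z_*(X)$. Uniqueness is then immediate: any $f^*$ satisfying the stated properties must obey
\[
f^*(B)=f^*(c_B\cdot[Y])=f^*(c_B)\cdot f^*([Y])=f^*(c_B)\cdot A,
\]
so there is no room to choose.

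Next I would verify the two required properties of the candidate $f^*$. The normalization holds because the cocycle corresponding to $[Y]$ is the unit $1\in C^*(Y)$, and cocycle pull-back sends units to units, so $f^*([Y])=1\cdot A=A$. For the compatibility, given $c\in C^*(Y)$ and $B\in Z_*(Y)$, the cocycle associated to $c\cdot B$ under the isomorphism of Theorem \ref{thm:duality for manifolds} is $c\cdot c_B$, because that isomorphism is one of rings. Hence
\[
f^*(c\cdot B) = f^*(c\cdot c_B)\cdot A = \bigl(f^*(c)\cdot f^*(c_B)\bigr)\cdot A = f^*(c)\cdot\bigl(f^*(c_B)\cdot A\bigr)= f^*(c)\cdot f^*(B),
\]
where the second equality uses that $C^*(Y)\to C^*(X)$ is a ring homomorphism and the third uses associativity of the cocycle--cycle action on $X$. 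Additivity of $f^*$ follows from the $\Z$-linearity of $B\mapsto c_B$, of cocycle pull-back, and of the intersection pairing on $X$.

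The main obstacle is already overcome upstream: once Theorem \ref{thm:duality for manifolds} is in hand, the construction of pull-backs for cycles is essentially forced by the construction for cocycles. What remains to be checked is entirely formal and belongs to the basic calculus of tropical cocycles developed in \S\ref{subsec:cocycles on manifolds}: that cocycle pull-back along a morphism $X\to Y$ of rational polyhedral spaces is a well-defined ring homomorphism $C^*(Y)\to C^*(X)$ without any assumption on $X$, and that the action of $C^*(X)$ on $Z_*(X)$ is associative with respect to the product in $C^*(X)$. Both of these follow from the piecewise-polynomial description of $C^*$, which is intrinsic to the polyhedral structure and transports cleanly along morphisms.
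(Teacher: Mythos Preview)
Your proof is correct and follows essentially the same approach as the paper's: both invert the ring isomorphism $C^*(Y)\to Z_*(Y)$ of Theorem~\ref{thm:duality for manifolds}, pull back the resulting cocycle, and intersect with $A$, then verify the two required properties using that the cocycle pull-back is a ring homomorphism and the cocycle--cycle action is associative. Your write-up is slightly more explicit about additivity and about which formal properties of cocycles are being used, but the argument is the same.
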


We should also note that Theorem \ref{thm:duality for manifolds} can be used for an alternative definition of the tropical intersection product, replacing the tropical modifications (as in \cite{Shaw13}) and the diagonal argument (as in \cite{FR10}) by the use of tropical cocycles.

\medskip

\noindent {\em Related work. } 
After our paper appeared on the arXiv, Amini and Piquerez announced in \cite{AP19} a proof of Theorem \ref{thm:duality for fans} via a comparison with with the tropical homology groups of compactified tropical linear spaces. With this in mind, the duality we prove is closely related to Poincaré duality  for tropical homology and cohomology on tropical manifolds  \cite{jss,jrs,GSH}. It has now also been shown by Ardila, Denham, and Huh in \cite[Section 6]{adh} that Lefschetz properties, which include Poincaré duality, are intrinsic to the support of the fan.

\section{Preliminaries}
\renewcommand*{\thethm}{\arabic{section}.\arabic{thm}}
\subsection*{Notation}
Throughout this paper, $N$ will denote a lattice and $M=\Hom(N,\Z)$ will denote its dual.  The symbol $\Sigma$ will always denote a smooth rational polyhedral fan in $N_\R=N\otimes_\Z \R$. We denote the set of its $k$-dimensional cones by $\Sigma(k)$, and for each of its rays $\rho\in \Sigma(1)$ we denote its primitive generator by $u_\rho\in N$. The evaluation pairing $M\times N\to \Z$ will be denoted by $\langle \cdot , \cdot\rangle$. 

\subsection{Conical sets and fans}
We refer to \cite{CLS} for background on rational polyhedral cones and fans.

A \emph{conical rational polyhedral set} in $N_\R$ is a union of finitely many rational polyhedral cones.
The \emph{lineality space} $\lineal(P)$ of a conical rational polyhedral set $P\subseteq N_\R$ is the set of all $w\in N_\R$ such that $P+\lambda w \subseteq P$ for all $\lambda\in \R$.
An isomorphism between two conical rational polyhedral sets $P\subseteq N_\R$ and $Q\subseteq N'_\R$ is a map $P\to Q$ that can be extended to an integral linear isomorphism $\Lin(P)\to \Lin(Q)$.
A \emph{fan structure} on a conical rational polyhedral set $P$ in $N_\R$ is a rational polyhedral fan $\Sigma$ in $N_\R$ whose support $\vert\Sigma\vert$ is $P$. 
The \emph{local cone} of a conical rational polyhedral set $P$ at a point $p\in P$ is given by the set
\[
\LC_{p}(P)=\{w\in N_\R \mid p+\epsilon w \in P \text{ for all }\epsilon \text{ small enough } \} \ .
\]
This is a conical rational polyhedral set in $N_\R$ again, and it only depends on an open neighborhood of $p$ in $P$.

A rational polyhedral fan $\Sigma$ in $N_\R$ is \emph{smooth} if every cone $\sigma\in \Sigma$ is the positive hull of elements $u_1\ldots, u_k\in N$ that can be completed to a basis of $N$. 
If $\Sigma$ is a rational polyhedral fan in $N_\R$ and $\sigma\in \Sigma$, then the \emph{star of $\Sigma$ at $\sigma$}, denoted by $\Star_\Sigma(\sigma)$, is the rational polyhedral fan in $N_\R/\Lin(\sigma)= (N/(\Lin(\sigma)\cap N))_\R$ whose cones are the images under the projection $N_\R\to N_\R/\Lin(\sigma)$ of the cones of $\Sigma$ containing $\sigma$.
If $\sigma$ is a cone of a smooth rational polyhedral fan $\Sigma$ in $N_\R$, and $u_1,\ldots,u_k$ are the primitive generators of the rays of $\sigma$, then then the \emph{stellar subdivision} $\Sigma^{\star}(\sigma)$ of $\Sigma$ at $\sigma$ is the unique smooth rational polyhedral fan in $N_\R$ that refines $\Sigma$ and has exactly one ray more than $\Sigma$, namely the ray spanned by $\sum_{i=1}^k u_i$. 

\subsection{Tropical linear spaces}

We recall the definition of tropical linear spaces following \cite[\S 4.2]{TropBook}.
Let $\M$ be a loop-free matroid on the ground set $E=E(\M)$. For each circuit $C\subseteq E$ of $\M$ we define a subset $H_C$ of $\R^E$ by
\[
H_C\coloneqq \{ w\in \R^E\mid \min\{w_e\mid e\in C\}\text{ is attained at least twice}\} \ ,
\]
where $w_e$ denotes the coordinate of $w$ corresponding to $e\in E$. We denote by $\widetilde L_M$ the intersection of all sets $H_C$, where $C$ ranges over all circuits of $\M$. It is a conical rational polyhedral set in $\R^E$. The all-one vector $\mathbf 1 \in \R^E$ is contained in the lineality space $\lineal(\widetilde L_\M)$. Therefore, the quotient
\[
L_\M\coloneqq \widetilde L_\M /\R \mathbf 1
\]
is a conical rational polyhedral set in $\R^M/\R\mathbf 1$. Any conical rational polyhedral set $P$ isomorphic to $L_\M$, for some loop-free matroid $\M$, is called a \emph{tropical linear space}. There are several important fan structures on $L_\M$, one of them being the \emph{fine subdivision} introduced in \cite{FeiStu} (see also \cite[\S 3]{AK06}), which we will denote by $\Sigma_\M$.

\subsection{Chow rings of fans and toric varieties} 
\label{subsec:chow rings of fans}
For a smooth fan $\Sigma$ in $N_\R$ we consider the commutative graded ring
\[
S^*(\Sigma)=\Z\left[\{x_\rho\}_{\rho\in \Sigma(1)}\right]
\]
freely generated by the rays of $\Sigma$.  Here, the monomials $x_\rho$ for $\rho\in\Sigma(1)$ have degree $1$.
Let $\mathcal I_\Sigma\subseteq S(\Sigma)$ be the ideal generated by the square-free monomials $\prod_{k\in K}x_k$ for all subsets $K\subseteq \Sigma(1)$ that do not span a cone of $\Sigma$. Let $\mathcal J_\Sigma\subseteq S(\Sigma)$ be the ideal generated by the elements 
\[
\sum_{\rho\in\Sigma(1)} \langle m, u_\rho \rangle x_\rho \  ,
\]
for all $m\in M$. The quotient
\[
\mathrm{SR}^*(\Sigma)\coloneqq S^*(\Sigma)/\mathcal I_\Sigma
\]
is called the \emph{Stanley-Reisner ring} of $\Sigma$, and the quotient
\[
R^*(\Sigma)=S^*(\Sigma)/(\mathcal I_\Sigma +\mathcal J_\Sigma)
\]
is called the \emph{Jurkiewicz-Danilov ring} or the \emph{Chow ring} of $\Sigma$. As implied in the notation, we will consider both $\mathrm{SR}^*(\Sigma)$ and $R^*(\Sigma)$ as $\N$-graded rings, with grading induced from $S^*(\Sigma)$.

Let $X_\Sigma$ denote the smooth toric variety (over an algebraically closed field) associated to $\Sigma$, and let $A^*(X_\Sigma)$ denote its Chow ring. Consider the ring homomorphism
\[
\begin{aligned}
S^*(\Sigma) &\to A^*(X_\Sigma) \\
x_\rho &\mapsto [D_\rho] \ ,
\end{aligned}
\]
where $[D_\rho]$ denotes the torus-invariant Weil divisor class associated to $\rho\in\Sigma(1)$.
The induced graded morphism
\[
R^*(\Sigma)\to A^*(X_\Sigma) \ .
\]
 is an isomorphism by \cite[{\S 3.1}]{Brion96}. For every $\sigma\in \Sigma$, the monomial $x_\sigma=\prod_{\rho\in\sigma(1)}x_\rho$ is sent to the class $[V(\sigma)]$ associated to the closure of the orbit corresponding to $\sigma$ via the orbit-cone correspondence. 

The description of $A^k(X_\Sigma)$ in \cite{FMSS95} implies that the morphism
\begin{equation}
\label{equ:presentation of R}
\Z^{\Sigma(k)}\to R^k(\Sigma) \ ,\;\; (a_\sigma)_{\sigma\in\Sigma(k)}\mapsto \sum_{\sigma\in\Sigma(k)} a_\sigma x_\sigma
\end{equation}
is surjective, with kernel generated by the relations 
\[
\sum_{\sigma\in\Sigma(k), \tau\subset \sigma} \langle m ,  u_{\sigma/\tau}\rangle x_\sigma
\]
for all $\tau\in \Sigma(k-1)$ and $m\in \tau^\perp$. Here $u_{\sigma/\tau}$ denotes any representative of the lattice normal vector of $\sigma$ with respect to $\tau$, i.e. any representative of the primitive generator of the ray $\sigma/\tau$. In particular, the morphism in (\ref{equ:presentation of R}) induces an injection 
\[\Hom(R^k(\Sigma),\Z)\to (\Z^{\Sigma(k)})^\vee\cong \Z^{\Sigma(k)}\] 
onto the group $\Mink_k(\Sigma)$ of $k$-dimensional \emph{Minkowski weights} as defined in \cite[\S 3]{FS97}. 

\begin{rem}
In \cite{Brion96} it is only stated that $R^*(\Sigma)_\Q\to A^*(X_\Sigma)_\Q$ is an isomorphism, but the proof given there actually works over the integers as well when one assumes that $\Sigma$ is smooth, as opposed to only simplicial. 
\end{rem}

The Stanley-Reisner ring $\mathrm{SR}^*(\Sigma)$ can also be described via piecewise polynomial functions. Let $\PP^*_\Z(\Sigma)$ denote the ring of all continuous functions $\phi\colon|\Sigma|\to \R$ such that, for every $\sigma\in\Sigma$, the restriction $\phi|_\sigma$ is given by a polynomial function {\em with integer coefficients}. Naturally $\PP^*_\Z(\Sigma)$ is graded by the degree of the polynomials functions. For every ray $\rho\in\Sigma(1)$ there is a unique function $\phi_\rho\in \PP_\Z^1(\Sigma)$, called \emph{Courant-function}, that has value $1$ on $u_\rho$ and vanishes on all rays $\rho\neq\rho'\in\Sigma(1)$. It is shown in \cite[\S 1.3]{Brion96} that sending $x_\rho\in S(\Sigma)$ to $\phi_\rho$ induces an isomorphism
\[
\mathrm{SR}^*(\Sigma)\to \PP^*_\Z(\Sigma) \ .
\]
The elements of the ideal $\mathcal J_\Sigma$ are mapped onto the linear functions in $\PP^1_\Z(\Sigma)$ so, if $\LPP_\Z(\Sigma)$ denotes the ideal generated by linear functions, we obtain an isomorphism
\[
R^*(\Sigma)\to \PP^*_\Z(\Sigma)/\LPP_\Z(\Sigma) \ .
\]

\begin{rem}
It is shown in \cite{Brion96} that the morphism $R^*(\Sigma)\to \PP^*_\Z(\Sigma)$ described above is an isomorphism when tensored with $\Q$, and $\Sigma$ is only assumed to be simplicial. In our situation, where $\Sigma$ is smooth, Brion's proof still works after replacing the rationals by the integers.
\end{rem}

\section{Poincar\'e duality on toric varieties}

\subsection{Lineality spaces of tropical linear spaces}

We need some basic results about tropical linear spaces. As we could not locate them in the literature, we will give brief proofs. 

Following \cite[\S 7.1]{OX}, the \emph{parallel connection} $P(\M_1,\M_2)$ of two matroids $\M_1$ and $\M_2$ with respect to two basepoints $p_1\in E(\M_1)$ and $p_2\in E(\M_2)$ is a matroid on $E(P(\M_1,\M_2))=E(\M_1)\cup E(\M_2)/(p_1\sim p_2)$. For simplicity we will assume that $p\coloneqq p_1=p_2$ and $E(\M_1)\cap E(\M_2)=\{p\}$, in which case we can take $E(P(\M_1,\M_2))=E(\M_1)\cup E(\M_2)$. Then $P(\M_1,\M_2)$ has precisely the following three types of circuits:
\begin{enumerate}[label=(\arabic*)]
\item All circuits of $\M_1$,
\item All circuits of $\M_2$,
\item $(C_1\cup C_2) \setminus\{p\}$ for all pairs of circuits $C_1$ of $\M_1$ and $C_2$ of $\M_2$ such that $p\in C_1\cap C_2$. 
\end{enumerate}

\begin{lem}
\label{lem:parallel connection}
Let $\M_1$ and $\M_2$ be loop-free matroids, and let $p_1\in E(\M_1)\eqqcolon E_1$ and $p_2\in E(\M_2)\eqqcolon E_2$. Let $\M=P(\M_1,\M_2)$ denote the parallel connection of $\M_1$ and $\M_2$ with respect to the basepoints $p_1$ and $p_2$. Then $L_\M$ is isomorphic to the quotient of $L_{\M_1\oplus \M_2}$ by a $1$-dimensional subspace of its lineality space. In particular, $L_\M\cong L_{\M_1}\times L_{\M_2}$.
\end{lem}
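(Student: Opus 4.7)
The plan is to construct an explicit isomorphism $L_\M \cong L_{\M_1} \times L_{\M_2}$ by splitting coordinates, and then to deduce the quotient statement from a direct description of the lineality space of $L_{\M_1 \oplus \M_2}$. Since $\widetilde L_{\M_1 \oplus \M_2} = \widetilde L_{\M_1} \times \widetilde L_{\M_2}$, its lineality space contains the two-dimensional subspace $\R\mathbf{1}_{E_1} \oplus \R\mathbf{1}_{E_2}$. Modulo the diagonal $\R\mathbf{1}_{E_1 \sqcup E_2}$, this descends to a one-dimensional subspace of the lineality of $L_{\M_1 \oplus \M_2}$, and the resulting quotient is naturally identified with $L_{\M_1} \times L_{\M_2}$. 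So, once the isomorphism $L_\M \cong L_{\M_1} \times L_{\M_2}$ is in hand, the main statement of the lemma follows immediately.

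To build the isomorphism I would define $\phi \colon \widetilde L_\M \to \widetilde L_{\M_1} \times \widetilde L_{\M_2}$ by $\phi(w) = (w|_{E_1}, w|_{E_2})$. This is well-defined because the circuits of $\M_i$ appear as type (1) and (2) circuits of $\M$. After passing to the quotient by $\R\mathbf{1}$, the induced map $L_\M \to L_{\M_1} \times L_{\M_2}$ is injective: if $w|_{E_1} = s\mathbf{1}_{E_1}$ and $w|_{E_2} = t\mathbf{1}_{E_2}$, then agreement at the shared coordinate $p$ forces $s=t$, so $w \in \R\mathbf{1}_{E_1\cup E_2}$.

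For surjectivity, given $(u,v) \in \widetilde L_{\M_1} \times \widetilde L_{\M_2}$, I would translate by multiples of $\mathbf{1}_{E_1}$ and $\mathbf{1}_{E_2}$ so that $u_p = v_p$, and then glue $u$ and $v$ into a single vector $w \in \R^{E_1 \cup E_2}$. The twofold attainment of the minimum of $w$ is automatic on circuits of type (1) and (2); the whole content of the argument lies in verifying it on a type (3) circuit $C = (C_1 \cup C_2)\setminus\{p\}$, and this case analysis is the step I expect to require real care. The crucial observation is that $E_1 \cap E_2 = \{p\}$ forces $(C_1\setminus\{p\}) \cap (C_2\setminus\{p\}) = \emptyset$, combined with the elementary fact that if the minimum of $w|_{C_i}$ is attained twice and $p$ is one of the attaining points, then $\min w|_{C_i \setminus \{p\}}$ equals $\min w|_{C_i}$. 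With these in hand, a short split on whether $w_p \leq \min w|_{C_i \setminus \{p\}}$ for each $i$ shows that the overall minimum on $C$ is attained at least twice, completing the surjectivity and hence the proof.
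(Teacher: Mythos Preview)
Your approach is correct and is essentially the paper's argument: both use the restriction map $w\mapsto (w|_{E_1},w|_{E_2})$ and the key fact that the min-attained-twice condition on type~(1) and~(2) circuits automatically implies it on type~(3). The only differences are organizational: you reverse the order of the two conclusions, you absorb the coloop case into the main argument (it simply has no type~(3) circuits to check), and you supply the case analysis for type~(3) circuits that the paper merely asserts.
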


\begin{proof}
We first observe that for every conical rational polyhedral set $P$ we have $P\cong \left(P/\lineal(P) \right)\times \lineal(P)$. Therefore, if $L$ and $L'$ are subspaces of $\lineal(P)$ of the same dimension, we have $P/L\cong P/L'$. Since $\widetilde L_{\M_1\oplus \M_2}\cong\widetilde L_{\M_1}\times \widetilde L_{\M_2}$,  by what we just observed, it suffices construct an isomorphism $\widetilde L_\M\cong \widetilde L_{\M_1}\times \widetilde L_{\M_2}/\R(v_1,v_2)$, with $(v_1,v_2)\in \lineal(L_{\M_1})\times \lineal (L_{\M_2})\setminus \R(\mathbf 1,\mathbf 1)$, that maps $\mathbf 1$ to the class of $(\mathbf 1,\mathbf 1)$. 

To prove this, first assume that either $p_1$ is a coloop of $\M_1$ or $p_2$ is a coloop of $\M_2$. Without loss of generality we may assume the former. By definition, $\M=(\M_1\setminus p_1)\oplus \M_2$. In this case, the $p_1$-th standard basis vector $e_{p_1}\in \R^{E_1}$ is contained in $\lineal(\widetilde L_{\M_1})$. Therefore, we have a chain of natural isomorphisms
\[
\widetilde L_\M = \widetilde L_{(\M_1\setminus p_1)\oplus \M_2}\cong \widetilde L_{\M_1\setminus p_1}\times \widetilde L_{\M_2} \cong \big(\widetilde L_{\M_1}/\R e_{p_1} \big) \times \widetilde L_{\M_2} \cong \widetilde L_{\M_1}\times \widetilde L_{\M_2}/\R(e_{p_1},0) \ ,
\]
the composite of which clearly maps $\mathbf 1$ to the class of $(\mathbf 1,\mathbf 1)$.

Now assume neither $p_1$ nor $p_2$ are coloops in $\M_1$ and $\M_2$, respectively. As in the definition of the parallel connection, we may assume that $p\coloneqq p_1=p_2$, that $E_1\cap E_2=\{p\}$, and that $E\coloneqq E(\M)=E_1\cup E_2$. 
If $w\in \R^{E}$ is contained in all sets $H_C$, where $C$ ranges over all circuits of $\M$ of type (1) and (2) in the definition of the parallel connections, then it is automatically contained in $H_C$ for all circuits of $\M$ of type (3) as well. This happens if and only if the vector $w|_{E_1}$ obtained by forgetting the coordinates corresponding to $E_2\setminus \{p\}$ is in $\widetilde L_{\M_1}$, and similarly $w|_{E_2}\in \widetilde L_{\M_2}$. In other words, the morphism $\R^{E}\to \R^{E_1}\times \R^{E_2}\cong \R^{E_1\sqcup E_2}$ induced by the canonical map $E_1\sqcup E_2\to E_1\cup E_2$ maps $\widetilde L_\M$ isomorphically onto the conical rational polyhedral subset of $\widetilde L_1\times \widetilde L_2$ where the two coordinates corresponding to $p$ coincide. This induces an isomorphism
\[
\widetilde L_\M\to \widetilde L_{\M_1}\times \widetilde L_{\M_2} /\R(\mathbf 1,0) \ ,
\]
which clearly maps $\mathbf 1$ to the class of $(\mathbf 1,\mathbf 1)$.

For the ``in particular'' statement, we note that if $L_\M\cong L_{\M_1\oplus \M_2}/L$ for some one-dimensional subspace $L$ of $\lineal(L_{\M_1\oplus \M_2})$, then $L_\M\cong \widetilde L_{\M_1}\times \widetilde L_{\M_2}/H$ for some two-dimensional subspace $H$ of $\lineal(\widetilde L_{\M_1}\times \widetilde L_{\M_2})$. But $\widetilde L_{\M_1}\times \widetilde L_{\M_2}/H$ is isomorphic to any quotient $\widetilde L_{\M_1}\times \widetilde L_{\M_2}/H'$ by any two-dimensional subspace $H'$ of $\lineal(\widetilde L_{\M_1}\times \widetilde L_{\M_2})$. In particular, we may take $H'=\Lin\{(\mathbf 1,0), (0,\mathbf 1)\}$, in which case
\[
\widetilde L_{\M_1}\times \widetilde L_{\M_2}/H' \cong L_{\M_1}\times L_{\M_2} \ .
\]
\end{proof}

\begin{lem}
\label{lem:linear if and only quotient linear}
Let $P$ be a conical rational polyhedral subset of $N_\R$ with lineality space $L$. Then $P$ is a tropical linear space if and only if $P/L$ is a tropical linear space. 
\end{lem}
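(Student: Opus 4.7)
The plan is to reduce both directions to Lemma~\ref{lem:parallel connection} via two structural observations. First, for any conical rational polyhedral set $P$ in $N_\R$ with lineality space $L$, there is a (non-canonical) isomorphism of conical rational polyhedral sets $P \cong (P/L) \times L$: the sublattice $N_L = N \cap L$ is saturated in $N$, so picking a lattice complement $N = N_L \oplus N'$ yields a decomposition $N_\R = L \oplus N'_\R$ under which $P$ is identified with $(P \cap N'_\R) \times L$, and the quotient map restricts to an isomorphism $P \cap N'_\R \to P/L$. Second, every $\R^k$ is itself a tropical linear space, realized as $L_{U_{k+1,k+1}}$ for the free loop-free matroid $U_{k+1,k+1}$ on $k+1$ elements: this matroid has no circuits, so $\widetilde L_{U_{k+1,k+1}} = \R^{k+1}$ and $L_{U_{k+1,k+1}} = \R^{k+1}/\R\mathbf{1} \cong \R^k$.

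The backward direction is then immediate. If $P/L \cong L_\M$ and $k = \dim L$, then
\[
P \;\cong\; L_\M \times \R^k \;\cong\; L_\M \times L_{U_{k+1,k+1}} \;\cong\; L_{P(\M,\, U_{k+1,k+1})},
\]
where the last isomorphism is Lemma~\ref{lem:parallel connection} applied at any choice of basepoints.

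For the forward direction, write $P \cong L_\M$ for some loop-free matroid $\M$, and decompose $\M = \M_1 \oplus \cdots \oplus \M_r \oplus U_{s,s}$ into connected components, where $\M_1, \ldots, \M_r$ are the components with at least two elements (each automatically loop-free, connected, and coloop-free), and $U_{s,s}$ collects the $s$ singleton components, i.e.\ the coloops of $\M$. The central computation is the identification $\lineal(\widetilde L_{\M_i}) = \R\mathbf{1}_{\M_i}$ for each connected loop-free coloop-free matroid $\M_i$, a standard fact in matroid theory. Combined with the direct-sum decomposition $\widetilde L_\M \cong \widetilde L_{\M_1} \times \cdots \times \widetilde L_{\M_r} \times \R^s$, this yields
\[
L_\M/\lineal(L_\M) \;\cong\; \widetilde L_\M/\lineal(\widetilde L_\M) \;\cong\; \prod_{i=1}^r L_{\M_i},
\]
and iteratively applying Lemma~\ref{lem:parallel connection} realizes the right-hand side as $L_{P(\M_1, \ldots, \M_r)}$, which is a tropical linear space. (When $r = 0$ one gets the trivial tropical linear space $\{0\} \cong L_{U_{1,1}}$.) The main obstacle is the explicit identification of $\lineal(\widetilde L_{\M_i})$; once that matroid-theoretic input is admitted, everything else is formal manipulation with Lemma~\ref{lem:parallel connection} and the splitting $P \cong (P/L) \times L$.
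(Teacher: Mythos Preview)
Your proof is correct and uses the same two inputs as the paper: Lemma~\ref{lem:parallel connection} and the characterization of the lineality space of $L_\M$ in terms of connectedness of $\M$ (which the paper cites as \cite[Lemma~2.3]{FR10}, and which you invoke as ``a standard fact''). The backward direction is argued identically in both.

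For the forward direction the organization differs. The paper argues by contradiction: take a loop-free $\M$ of minimal rank for which $L_\M/\lineal(L_\M)$ fails to be a tropical linear space, observe that the lineality space is nontrivial so $\M$ is disconnected, and use the parallel connection of a nontrivial direct-sum decomposition to drop the rank by one while preserving the quotient by the lineality space, contradicting minimality. You instead unroll this induction in one shot: decompose $\M$ completely into its connected components, use the connectedness criterion to compute the full lineality space, and then reassemble the product $\prod_i L_{\M_i}$ into a single tropical linear space by iterated parallel connection. Your version is more constructive---it actually names a matroid $\M'$ with $P/L\cong L_{\M'}$---while the paper's version avoids having to track the full component decomposition and only peels off one factor at a time. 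Either way, the substantive matroid-theoretic content is the same.
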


\begin{proof}
For the ``if'' direction we note that $P\cong L\times \left(P/L \right)$. So if $P/L\cong B(\M)$ for some loop-free matroid $\M$ and $r=\dim L$ then it follows from Lemma \ref{lem:parallel connection} that $P$ is isomorphic to the linear space associated to a parallel connection of $\M$ and the uniform matroid $U^{r+1}_{r+1}$ of rank $r+1$ on $r+1$ elements (with respect to any choice of base points). 

For the ``only if'' assume there exists a loop-free matroid $\M$ such that the quotient $L_\M/\lineal(L_\M)$ is \emph{not} a tropical linear space. We may assume that $\M$ has minimal rank among the loop-free matroids with this property. It follows that $\lineal(L_\M)\neq 0$, which is equivalent to $\M$ being disconnected by \cite[Lemma 2.3]{FR10}. Therefore, we can decompose $\M$ as $\M=\M_1\oplus \M_2$ for suitable loop-free matroids $\M_1$ and $\M_2$. Let $\M'$ be the parallel connection of $\M_1$ and $\M_2$ with respect to any choice of base points. Then $\M'$ is a loop-free matroid, its rank is equal to $r(\M)-1$, and $L_{\M'}$ is a quotient of $L_\M$ by  a $1$-dimensional subspace of $\lineal(L_\M)$ by Lemma \ref{lem:parallel connection}. In particular, $L_\M/\lineal(L_\M)\cong L_{\M'}/\lineal(L_{\M'})$, contradicting the minimality property of $\M$.
\end{proof}

\begin{lem}
\label{lem:star of linear is linear}
Let $\Sigma$ be a fan in $N_\R$ such that $|\Sigma|$ is a tropical linear space, and let $\sigma\in\Sigma$. Then $|\Star_\Sigma(\sigma)|$ is a tropical linear space as well. 
\end{lem}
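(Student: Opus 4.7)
The plan is to pick an interior point $p$ of $\sigma$, identify $|\Star_\Sigma(\sigma)|$ with a quotient of the local cone $\LC_p(|\Sigma|)$, and then argue that this local cone is itself a tropical linear space by switching to the fine subdivision.

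First, for any $p\in\relint(\sigma)$ the cones of $\Sigma$ meeting a small neighborhood of $p$ are exactly those having $\sigma$ as a face, which gives
\[
\LC_p(|\Sigma|)=\bigcup_{\tau\in\Sigma,\,\tau\supseteq\sigma}\bigl(\tau+\lineal(\sigma)\bigr)\ .
\]
From this I would read off that $\lineal(\sigma)\subseteq\lineal(\LC_p(|\Sigma|))$ and that $|\Star_\Sigma(\sigma)|$ is naturally identified with $\LC_p(|\Sigma|)/\lineal(\sigma)$.

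The main step will be to show that $\LC_p(|\Sigma|)$ itself is a tropical linear space. Since the local cone depends only on $|\Sigma|\cong L_\M$ and not on the chosen fan structure, I would recompute it using the fine subdivision $\Sigma_\M$ in place of $\Sigma$. Letting $\tau_0\in\Sigma_\M$ be the unique cone with $p\in\relint(\tau_0)$, the same decomposition applied to $\Sigma_\M$ gives
\[
\LC_p(|\Sigma|)\cong \lineal(\tau_0)\times|\Star_{\Sigma_\M}(\tau_0)|\ .
\]
The cone $\tau_0$ corresponds to a chain of flats of $\M$, and a direct inspection of the fine subdivision (as in \cite{AK06}) identifies $|\Star_{\Sigma_\M}(\tau_0)|$ with the Bergman fan of a direct sum of minors of $\M$; iterated application of Lemma~\ref{lem:parallel connection} then shows it is a tropical linear space, and absorbing the linear factor $\lineal(\tau_0)$ via one more parallel connection with a uniform matroid yields that $\LC_p(|\Sigma|)$ is a tropical linear space.

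To conclude, I would invoke Lemma~\ref{lem:linear if and only quotient linear} to split $\LC_p(|\Sigma|)\cong \lineal(\LC_p(|\Sigma|))\times Q$ with $Q$ a tropical linear space, so that
\[
|\Star_\Sigma(\sigma)|\cong \bigl(\lineal(\LC_p(|\Sigma|))/\lineal(\sigma)\bigr)\times Q\ .
\]
One last application of Lemma~\ref{lem:parallel connection} shows that this product of a linear space with a tropical linear space is again a tropical linear space. The main obstacle I anticipate lies in the middle step: the combinatorial identification of $|\Star_{\Sigma_\M}(\tau_0)|$ with a matroid fan via the flags-of-flats description of the fine subdivision. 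Once that is handled, the remaining arguments are essentially linear algebra combined with the two preceding lemmas.
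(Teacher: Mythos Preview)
Your overall architecture matches the paper's: pick $p\in\relint(\sigma)$, identify $|\Star_\Sigma(\sigma)|$ with the quotient $\LC_p(|\Sigma|)/\Lin(\sigma)$, show the local cone is itself a tropical linear space, and then conclude via Lemma~\ref{lem:linear if and only quotient linear}. The difference is entirely in the middle step. The paper does not pass to the fine subdivision at all; it simply invokes \cite[Lemma~2.2]{FR10}, which states directly that for $w\in L_\M$ the local cone $\LC_w(L_\M)$ equals $L_{\M_w}$, where $\M_w$ is the initial matroid (the matroid whose bases are the $w$-minimal bases of $\M$). That single citation replaces your entire detour through $\Sigma_\M$, the flag-of-flats description of $\tau_0$, the identification of $|\Star_{\Sigma_\M}(\tau_0)|$ as a product of Bergman fans of minors, and the repeated parallel connections. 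Your route is correct---the combinatorial identification you flag as the ``main obstacle'' is a genuine and well-known fact about the fine subdivision---but it re-proves from scratch a special case of something already available in the literature. The advantage of your approach is that it is self-contained and makes the matroid at the local cone explicit as a direct sum of minors, whereas the paper's approach is shorter and keeps the matroid implicit as $\M_w$.
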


\begin{proof}
We may assume that $|\Sigma|=L_\M$ for some loop-free matroid $\M$. Let $w\in\relint(\sigma)$. By \cite[Lemma 2.2]{FR10} the local cone $\LC_w|\Sigma|$ is the tropical linear space associated to the matroid $\M_w$ whose bases are the bases of $\M$ with minimal $w$-weight. The set $|\Star_\Sigma(\sigma)|$ is the quotient of $\LC_w|\Sigma|$ by $\Lin(\sigma)$. Therefore, the quotients of $\LC_w|\Sigma|$ and $|\Star_\Sigma(\sigma)|$ by their lineality spaces are isomorphic. Using Lemma \ref{lem:linear if and only quotient linear}, we conclude that $|\Star_\Sigma(\sigma)|$ is a tropical linear space.
\end{proof}

\subsection{Poincar\'e duality rings and blow-ups}

We first recall the definition of a Poincar\'e duality ring.

\begin{defn}
A commutative $\N$-graded ring $R^*$ is a  \emph{Poincar\'e duality ring of dimension $d$} if 
\begin{enumerate}[label= \alph*)]
\item $R^*$ is finitely generated as a $\Z$-module.
\item $R^d\cong \Z$ and $R^k=0$ for all integers $k>d$,
\item for every integer $k\in\Z$ the multiplication map and any identification $R^d\cong\Z$ induce an isomorphism
\[
R^k\cong \Hom(R^{d-k},\Z) \ .
\]
\end{enumerate}
\end{defn}

The following result is one of the key ingredients in our proof of Theorem \ref{thm:duality for fans}. It is a slight generalization of \cite[Proposition 2.2]{Petersen16} (with a similar proof), and of a purely algebro-geometric nature. 

\begin{prop}
\label{prop:blow-up is duality ring}
Let $Z$ be a smooth subvariety of codimension $c>1$ of a smooth variety $X$, and let $Y$ be the blow-up of $X$ along $Z$. Assume that for some positive integer $d$ the following three conditions on the Chow rings of $X$ and $Z$ are satisfied: 
\begin{enumerate}
\item $A^k(X) = 0$ for all integers $k>d$,
\item $A^k(Z)=0$ for all integers $k>d-c$,
\item The morphism
$
\Hom_\Z(A^d(X),\Z)\to \Hom_\Z(A^{d-c}(Z),\Z)
$
induced by the push-forward is an isomorphism.
\end{enumerate}
 Then $A^*(Y)$ is a Poincar\'e duality ring of dimension $d$ if and only if $A^*(X)$ is a Poincar\'e duality ring of dimension $d$ and $A^*(Z)$ is a Poincar\'e duality ring of dimension $d-c$.
\end{prop}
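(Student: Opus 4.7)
The plan is to apply the classical Chow ring formula for a smooth blow-up and analyze the induced Poincar\'e pairing blockwise. Let $\pi\colon Y\to X$ denote the blow-up, $j\colon E\hookrightarrow Y$ the exceptional divisor, $p\colon E=\Pj(N_{Z/X})\to Z$ the bundle projection, $i\colon Z\hookrightarrow X$ the inclusion of the center, and $\xi=c_1(\mathcal O_E(1))\in A^1(E)$. Fulton's blow-up formula (\emph{Intersection Theory}, \S 6.7) gives a direct sum decomposition of abelian groups
\[
A^k(Y)\;=\;\pi^*A^k(X)\;\oplus\;\bigoplus_{\ell=1}^{c-1} j_*\bigl(p^*A^{k-\ell}(Z)\cdot \xi^{\ell-1}\bigr),
\]
and the computations below rely on the projection formula, the self-intersection identity $j^*j_*\gamma=-\xi\cdot\gamma$, and the projective bundle push-forward $p_*\xi^s=0$ for $s<c-1$ with $p_*\xi^{c-1}=1$ (Segre class corrections for $s>c-1$).

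Conditions~(a) and~(b) follow immediately from this decomposition. Finite generation is clear; for $k>d$ each exceptional summand vanishes because $\ell\le c-1$ forces $k-\ell>d-c$, so hypothesis~(2) kills $A^{k-\ell}(Z)$, while hypothesis~(1) kills $\pi^*A^k(X)$; and in degree $d$ each exceptional summand vanishes by~(2) as well, so $\pi_*$ restricts to an isomorphism $A^d(Y)\xrightarrow{\sim}A^d(X)$.

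For condition~(c), I would compute the pairing $A^k(Y)\otimes A^{d-k}(Y)\to A^d(Y)\cong A^d(X)$ piecewise. The projection formula together with the commutative square $\pi\circ j=i\circ p$ gives
\[
\pi_*\bigl(\pi^*\alpha\cdot j_*(p^*B\cdot \xi^{\ell'-1})\bigr)=i_*\bigl((i^*\alpha\cdot B)\cdot p_*\xi^{\ell'-1}\bigr)=0
\]
for $1\le \ell'\le c-1$, so $\pi^*A^*(X)$ is orthogonal to the exceptional part, and the restriction of the pairing to $\pi^*A^*(X)$ is the pullback of the pairing on $A^*(X)$. For two exceptional contributions, the self-intersection formula gives
\[
j_*(p^*A\cdot \xi^{\ell-1})\cdot j_*(p^*B\cdot \xi^{\ell'-1})=-j_*\bigl(p^*(AB)\cdot \xi^{\ell+\ell'-1}\bigr),
\]
whose push-forward to $A^d(X)$ is $-i_*(AB\cdot p_*\xi^{\ell+\ell'-1})$: this vanishes for $\ell+\ell'<c$, equals $-i_*(AB)$ for $\ell+\ell'=c$, and for $\ell+\ell'>c$ contains only strictly lower-order Segre corrections. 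After reindexing the second factor by $\ell'\mapsto c-\ell'$, the pairing matrix on the exceptional part becomes block upper-triangular, and its diagonal blocks are (up to sign) the compositions $A^{k-\ell}(Z)\otimes A^{d-c-(k-\ell)}(Z)\to A^{d-c}(Z)\xrightarrow{i_*}A^d(X)$. Hypothesis~(3) identifies $i_*\colon A^{d-c}(Z)\to A^d(X)$ with the top-degree duality identification, so each such diagonal block is the duality pairing on $A^*(Z)$.

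Non-degeneracy of a block upper-triangular bilinear form is equivalent to non-degeneracy of each diagonal block, so $A^*(Y)$ is a Poincar\'e duality ring in dimension $d$ if and only if both $A^*(X)$ is in dimension $d$ and $A^*(Z)$ is in dimension $d-c$. The main obstacle is the integral bookkeeping in the exceptional-part analysis: one must verify over $\Z$ (not just over $\Q$) that the Segre-class corrections for $\ell+\ell'>c$ indeed populate only strictly upper-triangular positions in the reindexed matrix, so that the integral direct sum decomposition yields a genuine block upper-triangular pairing and the linear algebra argument goes through without loss of torsion information.
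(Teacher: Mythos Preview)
Your approach is essentially the paper's: both use Fulton's blow-up decomposition and analyze the intersection pairing blockwise, reducing the question to invertibility of a block-structured matrix whose diagonal blocks are the $X$- and $Z$-pairings. The paper phrases the outcome as ``block-diagonal in an appropriate basis'' (citing \cite[Example~8.3.9]{F98} and \cite[Proposition~2.2]{Petersen16}), whereas you obtain block-\emph{triangular}; your formulation is in fact the more accurate one, and of course it suffices since $\det$ of a block-triangular matrix is the product of the block determinants.

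Your flagged ``main obstacle'' is not an obstacle. The triangularity comes entirely from the vanishing $p_*\xi^{s}=0$ for $s<c-1$: the $(\ell,\ell')$ entry is $-i_*(AB\cdot p_*\xi^{\ell+\ell'-1})$, which is zero whenever $\ell+\ell'<c$. The Segre terms appear only for $\ell+\ell'>c$, i.e.\ strictly on the other side of the anti-diagonal, so after your reindexing they sit strictly off the diagonal regardless of what the Segre classes are. No integral bookkeeping beyond this is needed.

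The one point you should make explicit is the passage from hypothesis~(3) to ``$i_*\colon A^{d-c}(Z)\to A^d(X)$ is an isomorphism.'' Hypothesis~(3) only says the \emph{dual} map is an isomorphism; to conclude $i_*$ itself is, you need both groups to be free. The paper handles this by first observing that, under either branch of the ``if and only if,'' the decomposition forces $A^*(Y)$ (and hence its summands $A^*(X)$, $A^*(Z)$) to be free, so $A^d(X)\cong\Z$ and $A^{d-c}(Z)\cong\Z$, after which the dual-map hypothesis does give that $i_*$ is $\pm 1$. You should insert this step before invoking the block-triangular determinant argument.
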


\begin{proof}
Consider the diagram 
\begin{center}
\begin{tikzpicture}
\matrix[matrix of math nodes, row sep= 5ex, column sep= 4em, text height=1.5ex, text depth= .25ex]{
|(E)|  E		& 
|(Y)|  Y	\\
 |(Z)| Z 	&
|(X)|	X\\
};
\begin{scope}[->,font=\footnotesize,auto]
\draw (E) --node{$j$} (Y);
\draw (Z) --node{$i$} (X);
\draw (E)--node{$g$} (Z);
\draw (Y)--node{$f$} (X);
\end{scope}
\end{tikzpicture}
\end{center}
where $E$ is the exceptional divisor. Combining \cite[Theorem 3.3]{F98} and \cite[Proposition 6.7]{F98} we see that, for all $k\in \Z$, there is an isomorphism
\[
\left(\bigoplus_{i=1}^{c-1} A^{k-i}(Z) \right)\oplus A^k(X) \to A^k(Y)
\]
that maps $x\in A^k(X)$ to $f^*(x)$ and $z\in A^{k-i}(Z)$ to $j_*(g^*z)\cdot E^{i-1}$. In particular $A^*(Y)$ is free if and only if both $A^*(X)$ and $A^*(Z)$ are free. So if either $A^*(Y)$ or both $A^*(X)$ and $A^*(Z)$ are Poincar\'e duality rings, then the condition of 
\[
\Hom_\Z(A^d(X),\Z)\to \Hom_\Z(A^{d-c}(Z),\Z)
\]
being an isomorphism is equivalent to the push-forward
\[
A^{d-c}(Z)\to A^d(X)
\]
being an isomorphism. In this case, a combination of \cite[Theorem 3.3]{F98} and \cite[Proposition 6.7]{F98} shows that the morphisms
\begin{align*}
A^d(X)\to A^d(Y),&\;\; x\mapsto f^*(x)  \\
A^{d-c}(Z)\to A^d(Y),& \; \; z\mapsto j_*g^*z\cdot E^{c-1}
\end{align*}
are isomorphism, so we can identify $A^d(X)$, $A^d(Y)$, and $ A^{d-c}(Z)$ with $\Z$ in a way compatible with these isomorphisms.

Having noted this, we can proceed analogously to the proof of \cite[Proposition 2.2]{Petersen16}: by \cite[Example 8.3.9]{F98} the product in $A^*(Y)$ of
\[
\sum_{i=1}^{c-1} j_*g^*(z_i)E^{i-1} + f^*(x) \in A^k(Y) \ ,
\]
with $z_i\in A^{k-i}(Z)$ and $x\in A^k(X)$, and
\[
\sum_{i=1}^{c-1} j_*g^*(z'_i)E^{i-1} + f^*(x') \in A^{d-k}(Y) \ ,
\]
with $z'_i\in A^{d-k-i}(Z)$ and $x'\in A^{d-k}(X)$, is given  by
\begin{multline*}
\sum_{i,j=1}^{c-1} j_*g^*(z_i\cdot z'_j) \cdot E^{i+j-i} 
+ \sum_{i=1}^{c-1}j_*g^*(z_i\cdot i^*x') 
+\sum_{i=1}^{c-1}j_*g^*(z'_i\cdot i^*x) 
+f^*(x\cdot x')
=\\
=
\sum_{i+j\geq c}^{c-1} j_*g^*(z_i\cdot z'_j) \cdot E^{i+j-i} 
+f^*(x\cdot x')
\end{multline*}
It follows that, with the appropriate choice of bases, the pairing 
\[
A^k(Y)\otimes A^{d-k}(Y)\to A^d(Y)\cong \Z
\]
can be expressed as a block-diagonal  matrix, whose blocks are the matrices of the pairings 
\begin{align*}
A^k(X)\otimes A^{d-k}(X)&\to A^d(X)\cong \Z  \ , \text{ and} \\
A^{k-i}\otimes A^{d-c-(k-i)}(Z)&\to A^{d-c}(Z)\cong \Z \ ,
\end{align*}
where $1\leq i\leq c-1$. Since block-diagonal matrices are invertible if and only if all the blocks are invertible, this finishes the proof.
\end{proof}

\begin{rem}
We only need Proposition \ref{prop:blow-up is duality ring} in the case where $X$ is a toric variety and $Z$ is torus-invariant, in which case the statement can be phrased purely combinatorially. It would be interesting to give a proof by showing the decomposition of the Chow ring of the blow-up combinatorially. This seems to be a challenging problem though, and relying on a more general principle from algebraic geometry provides a clean and geometrically meaningful way around it.
\end{rem}

\subsection{Duality for toric varieties}

The next result relates the Chow rings of a smooth fan, its stars, and its star subdivisions.

\begin{prop}
\label{prop:stellar subdivisions}
Let $\Sigma$ be a smooth rational polyhedral fan such that $|\Sigma|$ is a tropical linear space, and let $\sigma\in \Sigma$. Then $R^*(\Sigma^{\star}(\sigma))$ is a Poincar\'e duality ring if and only if $R^*(\Sigma)$ and $R^*(\Star_\Sigma(\sigma))$ are Poincar\'e duality rings.
\end{prop}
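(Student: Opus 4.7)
The plan is to reduce to Proposition~\ref{prop:blow-up is duality ring} via the standard toric-geometric realisation of the stellar subdivision as a blow-up. Concretely, $X_{\Sigma^{\star}(\sigma)}$ is the blow-up of $X_{\Sigma}$ along the torus-invariant subvariety $V(\sigma)\cong X_{\Star_\Sigma(\sigma)}$; under the isomorphism $R^{*}(-)\cong A^{*}(X_{(-)})$ recalled in~\S\ref{subsec:chow rings of fans}, the claimed equivalence of Poincar\'e-duality properties becomes the conclusion of Proposition~\ref{prop:blow-up is duality ring} applied to $X=X_\Sigma$, $Z=V(\sigma)$, $Y=X_{\Sigma^{\star}(\sigma)}$, $c=\dim\sigma$ and $d=\dim\Sigma$. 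I assume $c\geq 2$, which is when the stellar subdivision is non-trivial and Proposition~\ref{prop:blow-up is duality ring} applies.

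To invoke that proposition I need to verify its three hypotheses. Conditions~(1) and~(2) are immediate: $R^{k}(-)$ is spanned by the classes $x_{\tau}$ with $\dim\tau=k$, and by Lemma~\ref{lem:star of linear is linear} the support of $\Star_\Sigma(\sigma)$ is again a tropical linear space, so in particular pure of dimension $d-c$. The substantive point is condition~(3): dualising, it requires that the push-forward
\[
i_{*}\colon R^{d-c}(\Star_\Sigma(\sigma))\longrightarrow R^{d}(\Sigma)
\]
induce an isomorphism $\Mink_{d}(\Sigma)\to \Mink_{d-c}(\Star_\Sigma(\sigma))$ under the Minkowski-weight identification $\Hom_{\Z}(R^{k},\Z)\cong \Mink_{k}$.

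Concretely this dual is the restriction sending a weight $w$ on the facets of $\Sigma$ to the weight $\tau/\sigma\mapsto w(\tau)$ on the facets of $\Star_\Sigma(\sigma)$. Since $|\Sigma|$ and $|\Star_\Sigma(\sigma)|$ are both tropical linear spaces, they are pure-dimensional and connected in codimension one; this forces the top Minkowski weight groups on each side to be free of rank one, generated by the constant fundamental weight~$1$. The restriction evidently sends fundamental class to fundamental class, so it is an isomorphism $\Z\to \Z$, and condition~(3) is verified. With all three conditions in hand, Proposition~\ref{prop:blow-up is duality ring} directly yields the desired ``iff''.

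The main anticipated obstacle is the rank-one identification of the top Minkowski weight group. I expect to justify this using the classical connectedness-in-codimension-one of Bergman fans of loop-free matroids (as captured, for our purposes, by Lemmas~\ref{lem:parallel connection} and~\ref{lem:linear if and only quotient linear}), together with the fact that this property depends only on the support $|\Sigma|$ and so can be read off from the fine subdivision $\Sigma_\M$.
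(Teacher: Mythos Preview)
Your proposal is correct and follows essentially the same route as the paper: realize $X_{\Sigma^{\star}(\sigma)}$ as the blow-up of $X_\Sigma$ along $V(\sigma)\cong X_{\Star_\Sigma(\sigma)}$, then apply Proposition~\ref{prop:blow-up is duality ring} after checking its three hypotheses, the only substantive one being that the dual of the push-forward on top-degree Chow groups is an isomorphism $\Z\to\Z$ sending fundamental weight to fundamental weight.

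One small point: for the rank-one identification $\Mink_d(\Sigma)\cong\Z$ (and likewise for the star), the paper simply invokes \cite[Lemma~2.4]{FR10}, which directly asserts that a tropical linear space carries a unique top-dimensional Minkowski weight up to scaling. Your proposed justification via Lemmas~\ref{lem:parallel connection} and~\ref{lem:linear if and only quotient linear} is misdirected---those lemmas concern lineality spaces and parallel connections, not connectedness in codimension one---so you should either cite \cite[Lemma~2.4]{FR10} as the paper does, or spell out the codimension-one connectedness argument separately (which is standard for Bergman fans but is not what those two lemmas provide).
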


\begin{proof}
Let $d=\dim\Sigma$ and $c=\dim\sigma$. Then $R^k(\Sigma)\cong R^k(\Sigma^{\star}(\sigma))\cong 0$ for all integers $k>d$ and $R^k(\Star_\Sigma(\sigma))=0$ for all integers $k>d-c$. Furthermore, $\Hom(R^d(\Sigma),\Z)$ is the group of $d$-dimensional Minkowski weights on $\Sigma$, which is isomorphic to $\Z$ by \cite[Lemma 2.4]{FR10} because $|\Sigma|$ is a tropical linear space. In particular, $R^d(\Sigma)\neq 0$, and similarly, $R^d(\Sigma^{\star}(\sigma))\neq 0$ and $R^{d-c}(\Star_\Sigma(\sigma))\neq 0$, the latter statement using Lemma \ref{lem:star of linear is linear}. We conclude that if $R^*(\Sigma)$, $R^*(\Sigma^{\star}(\sigma))$, or $R^*(\Star_\Sigma(\sigma))$ are Poincar\'e duality rings, then they are Poincar\'e duality rings of dimensions are $d$, $d$, and $d-c$, respectively.

Let $X=X_\Sigma$ and $X'=X_{\Sigma^{\star}(\sigma)}$ be the toric varieties (over an algebraically closed field) associated to $\Sigma$ and $\Sigma^{\star}(\sigma)$, respectively, and let $Z\subseteq X$ be the torus-invariant closed subvariety corresponding to $\sigma$ by the orbit-cone correspondence. Then $Z$ is naturally isomorphic to the toric variety associated to $\Star_\Sigma(\sigma)$ (and, in particular, is smooth), and $X'$ is the blow-up of $X$ at $Z$. As explained in \S\ref{subsec:chow rings of fans}, there are natural isomorphisms $R^*(\Sigma)\to A^*(X)$, $R^*(\Sigma^{\star}(\sigma))\to A^*(X')$, and $R^*(\Star_\Sigma(\sigma))\to A^*(Z)$. Using Proposition \ref{prop:blow-up is duality ring}, this reduces the assertion to proving that the composite 
\begin{multline}
\label{equ:dual of push-forward}
\Hom(R^d(\Sigma),\Z)\cong\Hom(A^d(X),\Z)\to \\
\to\Hom(A^{d-c}(Z),\Z)\cong \Hom(R^{d-c}(\Star_\Sigma(\sigma)),\Z)
\end{multline}
induced by the composite $R^{d-c}(\Star_\Sigma(\sigma)) \cong A^{d-c}(Z)\to A^{d}(X)\cong R^d(\Sigma)$, where the morphisms in the middle is the push-forward of cycles, is an isomorphism. Note that this morphism is easy to describe on square-free monomials. If  $\tau\in\Sigma(d)$ contains $\sigma$, then the monomial $x_{\tau/\sigma}\in R^d(\Star_\Sigma(\sigma))$ maps to the torus invariant cycle $[V(\tau/\sigma)]\in A^{d-c}(Z)$, which is mapped to $[V(\tau)]\in A^{d}(X)$ under the push-forward morphism. This, in turn, is mapped to $x_\tau\in R^d(\Sigma)$. By \cite[Lemma 2.4]{FR10}, the group $\Hom(R^d(\Sigma),\Z)$ of $d$-dimensional Minkowski weights on $\Sigma$ is isomorphic to $\Z$, and it is generated by the morphism $R^d(\Sigma)\to \Z$ mapping the monomial $x_\tau$ corresponding to $\tau\in\Sigma(d)$ to $1$. By what we just saw, the composite morphism displayed in (\ref{equ:dual of push-forward}) maps this to the morphism $R^d(\Star_\Sigma(\sigma))\to \Z$ that sends a monomial $x_{\tau/\sigma}$ corresponding to a cone $\tau\in \Sigma(d)$ containing $\sigma$ to $1$. Again by \cite[Lemma 2.4]{FR10}, this morphism freely generates the group $\Hom_\Z(R^{d-c}(\Star_\Sigma(\sigma)),\Z)$ of $(d-c)$-dimensional Minkowski weights on $\Star_\Sigma(\sigma)$. Therefore, the composite morphism displayed in (\ref{equ:dual of push-forward}) is indeed an isomorphism, finishing the proof.
\end{proof}

We are now ready to prove Theorem \ref{thm:duality for fans}. We have restated it here in terms of Chow rings of smooth fans instead of Chow rings of toric varieties.

\begin{thm}
\label{thm:duality for fans2}
Let $\Sigma$ be a smooth rational polyhedral fan in $N_\R$ such that $|\Sigma|$ is a tropical linear space. Then its Chow ring $R^*(\Sigma)$ is a Poincar\'e duality ring. 
\end{thm}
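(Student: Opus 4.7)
The plan is to induct on $d=\dim(\Sigma)$. The base case $d=0$ is immediate: $\Sigma$ consists only of the zero cone, $R^*(\Sigma)=\Z$, which is trivially a Poincar\'e duality ring of dimension $0$.

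For the inductive step, I would first observe that, up to isomorphism of conical rational polyhedral sets (and the corresponding isomorphism of Chow rings of compatible smooth fan structures), one may assume $|\Sigma|=L_\M$ for some loop-free matroid $\M$. The strategy is then to compare $\Sigma$ with the fine subdivision $\Sigma_\M$, for which Poincar\'e duality has already been established in the combinatorial Hodge theory of Adiprasito--Huh--Katz cited in the introduction (this is the AHK15 base case the authors explicitly flag as a key input).

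To carry out this comparison, I would invoke the weak factorization theorem for smooth toric varieties, applied to the birational map $X_\Sigma \dashrightarrow X_{\Sigma_\M}$. This supplies a chain of smooth rational polyhedral fans
\[
\Sigma=\Sigma_0,\ \Sigma_1,\ \ldots,\ \Sigma_n=\Sigma_\M,
\]
all with support $L_\M$, in which consecutive members differ by a single stellar subdivision (in one direction or the other). At each transition $\Sigma_i\leftrightarrow \Sigma_{i+1}$, Proposition \ref{prop:stellar subdivisions} transfers the Poincar\'e duality property between $R^*(\Sigma_i)$ and $R^*(\Sigma_{i+1})$ provided the Chow ring of the relevant star $\Star_{\Sigma_i}(\sigma)$ (or $\Star_{\Sigma_{i+1}}(\sigma)$) is itself Poincar\'e duality. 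But Lemma \ref{lem:star of linear is linear} identifies the support of this star as a tropical linear space, and its dimension is strictly less than $d$ since $\dim(\sigma)\geq 1$, so the inductive hypothesis supplies exactly what is required. Starting from the known Poincar\'e duality of $R^*(\Sigma_\M)$, the property propagates backward along the chain all the way to $R^*(\Sigma)$.

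The main obstacle I anticipate is citing the weak factorization theorem in precisely the form needed: one must ensure that $\Sigma$ and $\Sigma_\M$ can be connected entirely within the class of smooth fans whose support remains $L_\M$ throughout, so that Lemma \ref{lem:star of linear is linear} and the inductive hypothesis genuinely apply at every intermediate stage. This is the content of the toric version of weak factorization, but careful bookkeeping is required. Once this is in place, the remaining steps --- the base case, the matroidal reduction, and the repeated invocation of Proposition \ref{prop:stellar subdivisions} --- are routine.
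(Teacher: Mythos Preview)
Your proposal is correct and follows essentially the same route as the paper: induction on $\dim(\Sigma)$, reduction to the fine subdivision $\Sigma_\M$ via the AHK15 input, and transfer of Poincar\'e duality along a weak-factorization chain using Proposition~\ref{prop:stellar subdivisions}, with Lemma~\ref{lem:star of linear is linear} and the inductive hypothesis handling the stars. Your concern about the intermediate fans is not an actual obstacle---stellar subdivisions and assemblings never change the support, so every fan in the chain automatically has support $L_\M$, and the cited weak factorization theorem already produces smooth fans throughout.
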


\begin{proof}
We do induction on $d=\dim(\Sigma)$, the base case $d=0$ being trivial. If $d>0$, we may assume that $|\Sigma|$ is supported on $L_\M$ for some loopless matroid $\M$. By \cite[Theorem 6.19]{AHK15}, the statement is true for the fine subdivision fan structure $\Sigma_\M$ on $L_\M$. By the weak factorization theorem for toric varieties \cite[Theorem A]{wj}, there is a sequence of star subdivisions and star assemblings (the inverses of star subdivisions) connecting $\Sigma$ and $\Sigma_\M$. We thus reduce to showing that if $\sigma\in\Sigma$, then $R^*(\Sigma^{\star}(\sigma))$ is a Poincar\'e duality ring if and only if $R^*(\Sigma)$ is a Poincar\'e duality ring. By Proposition \ref{prop:stellar subdivisions} and the induction hypothesis, it suffices to show that $|\Star_\Sigma(\sigma)|$ is a tropical linear space. This is true by Lemma \ref{lem:star of linear is linear}, finishing the proof.	
\end{proof}

\section{Poincar\'e duality on tropical manifolds}

\subsection{Tropical fan cycles and piecewise polynomial functions}
\label{subsec:fan cycles}
If $\Sigma$ is a smooth fan in $N_\R$ and $\Delta$ is a smooth proper subdivision of $\Sigma$, then the induced proper morphism $X_\Delta\to X_\Sigma$ induces a morphism of abelian groups $f\colon R^*(\Delta)\to R^*(\Sigma)$, and a morphism of rings $g\colon R^*(\Sigma)\to R^*(\Delta)$, the first corresponding to the push-forward and the latter to the pull-back. Both of them have combinatorial descriptions.

We start with a description of $f\colon R^*(\Delta)\to R^*(\Sigma)$. If $\delta\in \Delta(k)$, and $\sigma$ is the minimal cone of $\Sigma$ containing $\delta$, then 
\[
f(x_\delta)=\begin{cases}
x_\sigma\ , &\text{if }\dim(\sigma)=k \ ,\\
0 \ ,		& \text{else.}
\end{cases}
\]
The induced morphism 
\[
\Mink_*(\Sigma)\cong\Hom(R^*(\Sigma),\Z) \to \Hom(R^*(\Delta),\Z)\cong \Mink_*(\Delta)
\]
is given by the refinement of weighted fans (cf.\ \cite[Definition 2.8]{AR10}). Passing to the direct limit over all smooth fans with a given conical rational polyhedral support $P$, we obtain the \emph{group of tropical fan cycles} introduced in \cite[Definition 2.12]{AR10}:
\[
Z_*^{\mathrm{fan}}(P)\coloneqq\injlim_{\Sigma:|\Sigma|=P} \Mink_*(\Sigma)=\injlim_{\Sigma: |\Sigma|=P} \Hom_\Z(R^*(\Sigma),\Z) \ ,
\]
where the direct limit is taken over all smooth fan structures for $P$.

If $P$ is a tropical linear space, then $Z^{\mathrm{fan}}_{\dim P}(P)\cong \Z$ is freely generated by the cycle $[P]$ represented by Minkowski weights having value $1$ on all top-dimensional cones. Moreover, there is a tropical intersection product in this case, making $Z_*^{\mathrm{fan}}(P)$ into a ring with unity $[P]$ (see \cite{FR10,Shaw13}).

The morphism $g\colon R^*(\Sigma)\to R^*(\Delta)$ is more conveniently described using piecewise polynomial functions. It is the morphism $\PP^*_\Z(\Sigma)/\LPP_\Z(\Sigma)\to \PP^*_\Z(\Delta)/\LPP_\Z(\Delta)$ induced by the inclusion $\PP^*_\Z(\Sigma)\hookrightarrow \PP^*_\Z(\Delta)$. We can again pass to the direct limit over all smooth fans with a given conical rational polyhedral support $P$ and obtain the ring of piecewise polynomial functions
\[
\PP_\Z^*(P)=\injlim_{\Sigma\colon |\Sigma|=P} \PP_\Z(\Sigma) \ .
\]
If $\LPP_\Z(P)$ denotes the ideal in $\PP_\Z^*(P)$ generated by the linear functions, then
\[
\PP_\Z^*(P)/\LPP_\Z(P)\cong \injlim_{\Sigma:|\Sigma|=P} \PP^*_\Z(\Sigma)/\LPP_\Z(\Sigma)=\injlim_{\Sigma:|\Sigma|=P} R^*(\Sigma) \  .
\] 
The ring $R^*(\Sigma)$, and hence also the ring $\PP_\Z^*(\Sigma)$, acts on $\Mink_*(\Sigma)=\Hom_\Z(R^*(\Sigma),\Z)$ via precomposition with the multiplication. In degree $1$, every element in $\PP_\Z^1(\Sigma)$ is a \emph{tropical rational function} in the sense of \cite{AR10}. There also is an intersection pairing of tropical rational functions with tropical fan cycles (see \cite[Definition 3.4]{AR10}).

\begin{lem}
\label{lem:the two products agree}
Let $\Sigma$ be a smooth fan. Then the pairing $\PP_\Z^1(\Sigma)\otimes_\Z \Mink_*(\Sigma)\to \Mink_*(\Sigma)$ from above coincides with the intersection pairing of tropical rational functions with tropical cycles. 
\end{lem}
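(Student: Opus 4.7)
The plan is to reduce by $\Z$-bilinearity to verifying the equality for the action of a single Courant function $\phi_\rho$ on a given Minkowski weight $\alpha\in \Mink_k(\Sigma)$, since the Courant functions $\phi_\rho$ for $\rho\in\Sigma(1)$ generate $\PP^1_\Z(\Sigma)$ as an abelian group. For each $\tau\in\Sigma(k-1)$ both pairings yield an integer, namely the weight of $\tau$ in the resulting $(k-1)$-dimensional cycle, so it suffices to compare these weights.

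On the Chow-ring side, the weight at $\tau$ is $\alpha(x_\rho\cdot x_\tau)$. If $\rho\notin\tau(1)$, the Stanley--Reisner relations give $x_\rho\cdot x_\tau=x_{\tau+\rho}$ whenever $\tau+\rho$ is a cone of $\Sigma$, and $0$ otherwise. If $\rho\in\tau(1)$, smoothness of $\Sigma$ produces an $m\in M$ with $\langle m,u_\rho\rangle=1$ and $\langle m,u_{\rho'}\rangle=0$ for the other rays of $\tau$; the relation $\sum_{\rho''\in\Sigma(1)}\langle m,u_{\rho''}\rangle x_{\rho''}=0$ in $R^1(\Sigma)$ then rewrites $x_\rho\cdot x_\tau$ as
\[
-\sum_{\sigma\in\Sigma(k),\,\sigma\supset\tau}\langle m,u_{\sigma/\tau}\rangle\,x_\sigma,
\]
for any choice of representatives $u_{\sigma/\tau}$.

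On the AR10 side, I expand the formula from \cite[Definition 3.4]{AR10} for $\phi_\rho\cdot\alpha$ at $\tau$, taking each $u_{\sigma/\tau}$ to be the primitive generator of the ray in $\sigma(1)\setminus\tau(1)$. When $\rho\notin\tau(1)$, only $\sigma=\tau+\rho$ contributes and the answer matches the Chow-ring calculation directly. When $\rho\in\tau(1)$, the Courant function $\phi_\rho$ restricted to $\Lin(\tau)$ agrees with the linear form $m$; combined with the balancing condition $\sum_\sigma\alpha(\sigma)u_{\sigma/\tau}\in\Lin(\tau)\cap N$, this converts the AR10 expression into precisely the sum $-\sum_\sigma\alpha(\sigma)\langle m,u_{\sigma/\tau}\rangle$ obtained above.

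The main obstacle is the case $\rho\in\tau(1)$: matching the two formulas requires both an auxiliary $m\in M$ (whose existence relies on $\Sigma$ being smooth) and the balancing of $\alpha$ to pass from evaluating a globally only piecewise-linear function $\phi_\rho$ to evaluating the genuine linear form $m$ on the vector $\sum_\sigma\alpha(\sigma)u_{\sigma/\tau}$, which lies in $\Lin(\tau)\cap N$. Once this identification is in hand, the remaining verification is routine bookkeeping.
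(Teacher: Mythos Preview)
Your argument is correct. Both cases check out: when $\rho\notin\tau(1)$ the Courant function already vanishes on $\tau$ and the identification is immediate, and when $\rho\in\tau(1)$ your use of the linear relation coming from $m$ on the Chow side together with the balancing condition on the AR side yields matching expressions.

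The paper organizes the same computation differently. Rather than reducing to Courant generators and splitting into cases, it observes at the outset that both pairings are unchanged when an integral linear function is added to $\phi$, so for a fixed $\tau$ one may assume $\phi|_\tau=0$. After this normalization every $\phi$ behaves like your ``easy'' case $\rho\notin\tau(1)$: writing $\phi=\sum_{\rho\notin\tau(1)}\phi(u_\rho)\phi_\rho$ and multiplying by $x_\tau$ gives $\sum_{\sigma\supset\tau}\phi(u_{\sigma/\tau})x_\sigma$ directly, and on the AR side the correction term $\phi_\tau(\sum\alpha(\sigma)u_{\sigma/\tau})$ vanishes. Your case $\rho\in\tau(1)$ is in effect rediscovering this normalization inside the Chow ring (the relation $\sum_{\rho''}\langle m,u_{\rho''}\rangle x_{\rho''}=0$ is precisely subtracting the linear function $m$) and on the AR side (replacing $\phi_\rho$ by $m$ on $\Lin(\tau)$). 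So the ingredients are identical; the paper's version simply front-loads the reduction and thereby avoids the case split.
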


\begin{proof}
Let $c\in \Mink_k(\Sigma)$ be a Minkowski weight, and let $\phi\in \PP_\Z^1(\Sigma)$ be a piecewise linear function on $\Sigma$. We write $c(x_\sigma)=c(\sigma)$ when identifying $\Mink_k(\Sigma)$ with $\Hom_\Z(R^k(\Sigma),\Z)$. Let $\tau\in \Sigma(k-1)$. We will compare the weights on $\tau$ of the two pairings of $\phi$ and $\tau$. Because both are defined modulo integral linear functions, we may assume that $\phi\vert_\tau=0$ after potentially subtracting from $\phi$ an integral linear function that agrees with $\phi$ on $\tau$.
Let ``$\cdot$'' denote the action defined via the multiplication on $R^*(\Sigma)$.  The Minkowski weight $\phi\cdot c$ has at $\tau$ the weight 
\[
\phi\cdot c(\tau)= c\left(x_\tau\cdot \sum_{\rho\in\Sigma(1)\setminus\tau(1)} \phi(u_\rho) \cdot x_\rho\right)=\sum_{\rho\in\Sigma(1)\setminus\tau(1)} \phi(u_\rho)\cdot x_\rho \cdot x_\tau \ .
\]
If  $\tau$ and $\rho\in \Sigma(1)\setminus \tau(1)$ span a $k$-dimensional cone $\sigma$, then $x_\tau\cdot x_\rho=x_\sigma$ and $u_\rho$ is a representative of the lattice normal vector of $\sigma$ with respect to $\tau$. Otherwise, $x_\tau\cdot x_\rho=0$. It follows that 
\[
\phi\cdot c(\tau)= \sum_{\sigma\in\Sigma(k)\colon \tau\subseteq \sigma} \phi(u_{\sigma/\tau})x_\sigma \ ,
\]
which is precisely the weight on $\tau$ of the intersection pairing of the tropical rational function $\phi$ and $c$ considered in \cite{AR10}.
\end{proof}

Since $\PP^*_\Z(\Sigma)$ is generated in degree $1$, Lemma \ref{lem:the two products agree} shows that the action of $\PP^*_\Z(\Sigma)$ on $\Mink_*(\Sigma)$ defined above agrees with with the action considered in \cite{F11}. Because the intersection pairing of \cite{AR10} is compatible with refinements, this also shows that we obtain an induced action of $\PP_\Z^*(P)$ on $Z^{\mathrm{fan}}_*(P)$ for every conical rational polyhedral set $P$.

\subsection{Cycles tropical manifolds}
\label{subsec:cycles on manifolds}

On any subset $V\subseteq N_\R$ there is a natural sheaf $\Aff_V$ of affine linear functions with integer slopes. A \emph{(boundaryless) rational polyhedral space} is a second-countable Hausdorff space $X$ together with a sheaf of continuous functions $\Aff_X$, such that every point $x\in X$ is contained in a \emph{chart}, by which we mean a homeomorphism of an open neighborhood $U$ of $x$ and an open subset $V$ of a conical rational polyhedral set $P$ which identifies $\Aff_U$ with $\Aff_V$. A rational polyhedral space $X$ is called a \emph{( boundaryless) tropical manifold} if $P$ can always be chosen as a tropical linear space for all $x\in X$.

Let $X$ be a rational polyhedral space, let $x\in X$, and let $U\xrightarrow{\phi}V\subseteq P$ be a chart around $x$. Then the isomorphism type of the local cone $\LC_{\phi(x)}(P)$ does not depend on the chart $\phi$. We choose one such isomorphism class as the \emph{local cone} of $X$ at $x$ and we denote it by $\LC_x(X)$. By construction, there exists an open neighborhood of $0$ in $\LC_x(X)$ that is naturally isomorphic to an open neighborhood of $x$ in $X$.

To define tropical cycles on rational polyhedral spaces we first recall that if $P$ is a conical rational polyhedral set in $N_\R$, then any  fan cycle $A\in Z^{\mathrm{fan}}_k(P)$ is defined by an integer valued function on $P$. If $A$ is a represented by a Minkowski weight $c\in \Mink_k(\Sigma)$ for a fan $\Sigma$ with $|\Sigma|=P$, then the support $|A|$ of $A$ is defined as the union of all cones $\sigma\in\Sigma(k)$ such that $c(\sigma)\neq 0$. This is independent of the representative $c$. Let $|A|^{\max}$ denote the set of all $w\in |A|$ that have a neighborhood which is an open subset of a linear subspace of $N_\R$. There exists a unique locally constant function $|A|^{\max}\to \Z$ that has value $c(\sigma)$ on the relative interior $\relint(\sigma)$ of all $\sigma\in\Sigma(k)$ with $\sigma\subseteq \vert A\vert$. Extending this function by $0$, one obtains an integer valued function on $|P|$ that is independent of $c$ and determines $A$. In fact, since this function is invariant under the $\R_{>0}$ scaling action, it is uniquely determined by its germ at the origin. Based on this, we will identify $Z_*^{\mathrm{fan}}(P)$ with germs of integer valued functions at the origin $0\in P$ in the rest of this section.

If $X$ is a rational polyhedral space, we define the group $Z_k(X)$ of \emph{tropical $k$-cycles} on $X$ as the group of integer valued function $A\colon X\to \Z$ for which every germ $A_x$ is a tropical fan $k$-cycle on $\LC_x X$. As this condition is local, the presheaf 
\[
X\supseteq U\mapsto Z_k(U)
\]
is a sheaf, which we will denote by $(\mZ_X)_k$. It follows from the definition that $(\mZ_{X,x})_k\cong Z_k^{\mathrm{fan}}(\LC_x X)$ for all $x\in X$. 

If $X$ is a tropical manifold, $Z_{\dim X}(X)$ is freely generated by the cycle $[X]$ which has value $1$ on a dense open subset of $X$. Moreover the tropical intersection products on the local cones of $X$ induce an intersection product on $Z_{\dim X}(X)$, making it a commutative ring with unity $[X]$. See \cite{FR10,Shaw13} for more details.

\subsection{Cocycles tropical manifolds}
\label{subsec:cocycles on manifolds}

To define piecewise polynomial functions and tropical cocycles on rational polyhedral spaces, we need to allow polynomials with non-integer coefficients. Let $\Sigma$ be a smooth fan in $N_\R$. For each $k\in \N$ we denote by $\PP^k(\Sigma)$ the group of continuous functions $|\Sigma|\to \R$ such that  the restriction to each $\sigma\in \Sigma$ is given by a polynomial function of degree at most $k$, whose degree $k$-part has integer coefficients. We remark that this makes our definition different from the one used in \cite{F11}.

The direct sum
\[
\PP^*(\Sigma)\coloneqq \bigoplus_{k\in \Z} \PP^k(\Sigma)
\]
is naturally a graded ring. Let $\LPP(\Sigma)$ be the ideal in $\PP(\Sigma)$ generated in degree $1$ by the affine linear functions with integer slopes. It is straightforward to check that $\PP^*(\Sigma)/\LPP(\Sigma)\cong \PP^*_\Z(\Sigma)/\LPP_\Z(\Sigma)$, and in particular $\PP^*(\Sigma)/\LPP(\Sigma)\cong R^*(\Sigma)$. For a conical polyhedral set $P$ we define
\[
\PP^{*}(P)=\injlim_{\Sigma\colon |\Sigma|=P} \PP^*(\Sigma) \ .
\]
In other words, $\PP^k(P)$ is the set of functions $P\to \R$ that are in $\PP^k(\Sigma)$ for some smooth fan structure $\Sigma$ for $P$. 

\begin{rem}
\label{rem:inconsitence in definition of cocycles}
The advantage of considering $\PP^*(P)$ in favor of $\PP_\Z^*(P)$ is that for $\phi\in \PP^k(P)$ and $p\in P$ the germ $\phi_p$ can be identified with the germ of an element of $\PP^k(\LC_p P)$, which is not true in general for an element of $\PP_\Z^k(P)$ because the expansion of a homogeneous polynomial with integer coefficients around a point with non-integer coordinates will in general neither be homogeneous, nor have integer coefficients. This distinction has not been made in \cite{F11}, leading to a minor inaccuracies in the definition of tropical cocycles in \textit{loc.\ cit.}
\end{rem}

If $X$ is a rational polyhedral space we define $\PP^{k}(X)$ as the group of functions $\phi\colon X\to \R$ such that for each $x\in X$ the germ $\phi_x$ can be identified with the germ of an element of $\PP^{k}(\LC_x X)$, and $\PP^*(X)=\bigoplus_{k\in \N}\PP^k(X)$. Since this is a local definition, the presheaf
\[
X\supseteq U\mapsto \PP^*(U)
\]
is a sheaf of graded rings, which we denote by $\mPP^*_X$. By construction, the stalk $\mPP^*_{X,x}$ at $x\in X$ is naturally isomorphic to $\PP^*(\LC_x X)$.  The sheaf $\Aff_X$ of affine linear functions on $X$ is a subsheaf of $\mPP^{1}_X$, and we denote the ideal sheaf generated by $\Aff_X$ by $\mLPP_X$. 

\begin{defn}
Let $X$ be a rational polyhedral space, then we define the sheaf of \emph{tropical cocycles} as
\[
\mathcal C^*_X\coloneqq \mPP^*_X/\mLPP_X \ .
\]
Its ring of global sections
\[
C^*(X)\coloneqq \Gamma(X,\mathcal C^*_X)
\]
is the ring of \emph{tropical cocycles} on $X$.
\end{defn}

By construction, the stalk $\mathcal C^*_{X,x}$ at $x\in X$ is isomorphic to $\PP^*(\LC_x X)/\LPP(\LC_x X)\cong \PP^*_\Z(\LC_x X)/\LPP_\Z(\LC_x X)$ and hence $(\mZ_{X,x})_*$ is an $\mC^*_{X,x}$-module. It is easy to check that this makes $(\mZ_X)_*$ a sheaf of $\mC^*_X$-modules. Indeed, since $\mC^*_{X,x}$ is generated in degree $1$, it suffices to show that the pairings $\mC^1_{X,x}\otimes_\Z (\mZ_{X,x})_*\to (\mZ_{X,x})_*$  induce a pairing $\mC^1_X\otimes_\Z (\mZ_X)_*\to (\mZ_X)_*$. The sections in $\mC^1_X$ are usually called \emph{tropical Cartier divisors}. The intersection pairing of tropical Cartier divisors and tropical cycles is local by \cite[Proposition 1.1]{Rau16}, so then the statement follows from Lemma \ref{lem:the two products agree}.

\subsection{Duality for tropical manifolds}

To deduce Theorem \ref{thm:duality for manifolds} from Theorem \ref{thm:duality for fans2}, we will need the following lemma.

\begin{lem}
\label{lem:local isomorphism}
Let $L$ be a tropical linear space. Then the morphism
\[
\PP^*(L)/\LPP(L)\to Z^{\mathrm{fan}}_*(L),\;\; \alpha \mapsto \alpha\cdot [L]
\]
is an isomorphism. 
\end{lem}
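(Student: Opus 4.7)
The plan is to reduce the statement to a single smooth fan structure on $L$ using Theorem~\ref{thm:duality for fans2}, and then pass to the direct limit over all such structures. Fix a smooth fan $\Sigma$ with $|\Sigma|=L$, and set $d=\dim L$. Since $L$ is a tropical linear space, \cite[Lemma~2.4]{FR10} gives a canonical identification $\Mink_d(\Sigma)=\Hom_\Z(R^d(\Sigma),\Z)\cong\Z$, with free generator the fundamental Minkowski weight $[L]_\Sigma$ that assigns $1$ to every top-dimensional cone. By Theorem~\ref{thm:duality for fans2}, $R^*(\Sigma)$ is a Poincar\'e duality ring of dimension $d$; interpreted through the identification above, this says exactly that multiplication by $[L]_\Sigma$ induces isomorphisms
\[
R^k(\Sigma)\xrightarrow{\sim}\Mink_{d-k}(\Sigma),\qquad \alpha\mapsto \alpha\cdot [L]_\Sigma,
\]
where the action is the one recalled in \S\ref{subsec:fan cycles}.

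Next I would check that these isomorphisms are functorial with respect to refinement. If $\Delta$ is a smooth refinement of $\Sigma$, with pull-back $g\colon R^*(\Sigma)\to R^*(\Delta)$ and refinement map $\Mink_*(\Sigma)\to\Mink_*(\Delta)$, then $[L]_\Delta$ is the image of $[L]_\Sigma$ under the latter, since both weights are identically $1$ on top-dimensional cones. Compatibility of the module actions in degree $1$ is Lemma~\ref{lem:the two products agree} combined with the fact that intersections of tropical rational functions with tropical fan cycles are refinement-compatible (as noted at the end of \S\ref{subsec:fan cycles}); since $R^*(\Sigma)$ is generated in degree~$1$, this extends to all degrees. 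Thus the square with vertices $R^k(\Sigma)$, $R^k(\Delta)$, $\Mink_{d-k}(\Sigma)$, $\Mink_{d-k}(\Delta)$ commutes.

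Finally I would pass to the direct limit. By the discussion in \S\ref{subsec:cocycles on manifolds}, $\injlim_\Sigma R^*(\Sigma)\cong \PP^*(L)/\LPP(L)$, and by definition $\injlim_\Sigma \Mink_{d-k}(\Sigma)=Z^{\mathrm{fan}}_{d-k}(L)$. A filtered colimit of a compatible family of isomorphisms is an isomorphism, and the formula $\alpha\mapsto\alpha\cdot[L]$ is preserved in the limit because each $[L]_\Sigma$ is sent to $[L]_\Delta$ by every refinement, so their common image $[L]\in Z^{\mathrm{fan}}_d(L)$ is unambiguous.

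The main obstacle I anticipate is the bookkeeping in the refinement step: one must confirm that the precomposition action on $\Hom(R^*(\Sigma),\Z)$ used to define Poincar\'e duality really agrees with the tropical intersection action on $Z^{\mathrm{fan}}_*(L)$ under the chosen identifications, and that the top class $[L]_\Sigma$ is carried precisely to $[L]_\Delta$ by every smooth refinement. Once this naturality is in place, Theorem~\ref{thm:duality for fans2} upgrades to the desired global statement via the direct limit.
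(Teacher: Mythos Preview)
Your proposal is correct and follows essentially the same approach as the paper: reduce to a single smooth fan structure via Theorem~\ref{thm:duality for fans2}, then pass to the direct limit over all such structures. The paper's proof is terser on the naturality point you single out as the ``main obstacle''---it simply asserts that the action on the limit is induced by ``the compatible actions for the fans $\Sigma$''---whereas you spell out why the square with $R^*(\Sigma)$, $R^*(\Delta)$, $\Mink_*(\Sigma)$, $\Mink_*(\Delta)$ commutes; but this is a difference in level of detail, not in strategy.
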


\begin{proof}
As discussed above, we have
\begin{align*}
\PP^*(L)/\LPP(L)&\cong \injlim_{\Sigma:|\Sigma|=L} R^*(\Sigma) \ ,	 \\
Z^{\mathrm{fan}}_*(L) &\cong \injlim_{\Sigma:|\Sigma|=L} \Hom_\Z(R^*(\Sigma),\Z)\ ,
\end{align*}
where the direct limit is over all smooth fan structures $\Sigma$ for $L$. 

The action of $\injlim_{\Sigma:|\Sigma|=L} R^*(\Sigma)$ on $\injlim_{\Sigma:|\Sigma|=L} \Hom_\Z(R^*(\Sigma),\Z)$ is induced by the compatible actions for the fans $\Sigma$ appearing in the direct limit, of $R^*(\Sigma)$ on $\Hom_\Z(R^*(\Sigma),\Z)$ defined by precomposition with the multiplication map. For each fan $\Sigma$ appearing in the  direct limit, the class $[L]$ is represented by the morphism $[L]_\Sigma\colon: R^{\dim L}(\Sigma) \to \Z$ sending $x_\sigma$ to $1$ for each $\sigma\in \Sigma(\dim L)$. By Theorem \ref{thm:duality for fans2}, $[L]_\Sigma$ is an isomorphism, and another application of Theorem \ref{thm:duality for fans2} shows that morphism
\[
R^*(\Sigma) \to \Hom_\Z(R^*(\Sigma),\Z),\;\; \phi\mapsto\phi\cdot [L]_\Sigma
\]
is an isomorphism. Since multiplication with $[L]_\Sigma$ is an isomorphism for all fans $\Sigma$ appearing in the direct limit, it follows immediately that multiplication with $[L]$ is an isomorphism as well, finishing the proof.
\end{proof}

\begin{thm}
\label{thm:duality for manifolds2}
Let $X$ be a tropical manifold. Then the morphism
\[
C^*(X)\to Z_*(X),\;\; \alpha\mapsto \alpha\cdot [X]
\]
is an isomorphism of rings. 
\end{thm}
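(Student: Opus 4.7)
The approach is to sheafify the multiplication-by-$[X]$ map and reduce to the local statement in Lemma~\ref{lem:local isomorphism}.

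First, I would upgrade the map to a morphism of sheaves $\Phi \colon \mC^*_X \to (\mZ_X)_*$. For every open $U \subseteq X$, the restriction $[X]|_U$ coincides with the fundamental cycle $[U]$ of the tropical manifold $U$, since $[X]$ has value $1$ on a dense open subset. The intersection pairing of cocycles and cycles is local (it is precisely the $\mC^*_X$-module structure on $(\mZ_X)_*$ constructed at the end of \S\ref{subsec:cocycles on manifolds}). Hence the assignments $\alpha \mapsto \alpha \cdot [U]$ are compatible with restriction and define a morphism of sheaves $\Phi$.

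Second, I would show that $\Phi$ is an isomorphism of sheaves of abelian groups. At $x \in X$, the stalk identifications $\mC^*_{X,x} \cong \PP^*(\LC_x(X))/\LPP(\LC_x(X))$ and $(\mZ_X)_{*,x} \cong Z^{\mathrm{fan}}_*(\LC_x(X))$ from \S\ref{subsec:cocycles on manifolds} send the germ of $[X]$ to the cycle $[\LC_x(X)]$, so the stalk of $\Phi$ at $x$ is exactly the map $\alpha \mapsto \alpha \cdot [\LC_x(X)]$. Since $X$ is a tropical manifold, $\LC_x(X)$ is a tropical linear space, and Lemma~\ref{lem:local isomorphism} gives that this stalk map is an isomorphism. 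Taking global sections yields the required bijection $C^*(X) \to Z_*(X)$.

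Finally, I would promote this to a ring isomorphism. The image of $1 \in C^*(X)$ is $1 \cdot [X] = [X]$, which is the unit of $Z_*(X)$. Multiplicativity, i.e.\ $(\alpha \beta) \cdot [X] = (\alpha \cdot [X]) \cdot (\beta \cdot [X])$ in $Z_*(X)$, is the identity of two sections of $(\mZ_X)_*$, so it can be verified stalkwise, reducing to the analogous compatibility for a tropical linear space $L$: the ring structure on $\PP^*(L)/\LPP(L)$ matches the tropical intersection product on $Z^{\mathrm{fan}}_*(L)$ under Lemma~\ref{lem:local isomorphism}. This last compatibility is the main obstacle. I would establish it by working over a fixed smooth fan structure $\Sigma$, where $\PP^*(\Sigma)/\LPP(\Sigma) \cong R^*(\Sigma) \cong A^*(X_\Sigma)$, and appealing to the standard fact that, once Poincar\'e duality (Theorem~\ref{thm:duality for fans2}) is known, multiplication in the Chow ring corresponds to the intersection product of cycles on the smooth toric variety $X_\Sigma$. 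Alternatively, because $\mC^*$ is generated in degree one, an induction on the degree of one of the factors, using Lemma~\ref{lem:the two products agree} together with the associativity of the inductively defined tropical intersection product from \cite{FR10, Shaw13}, also yields the identity.
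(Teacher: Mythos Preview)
Your proposal is correct and matches the paper's proof: sheafify the map, verify it is an isomorphism on stalks via Lemma~\ref{lem:local isomorphism}, and then check multiplicativity. For the last step the paper uses precisely your alternative route---generation of $\mC^*_X$ in degree~$1$ together with the compatibility of the tropical intersection product with Cartier divisors, citing \cite[Theorem~4.5]{FR10}; your first suggestion, via cycle-level intersection on $X_\Sigma$, is less direct since the tropical product on $Z^{\mathrm{fan}}_*(L)$ is defined intrinsically in \cite{FR10,Shaw13} rather than through the (non-complete) toric variety, so Poincar\'e duality alone does not immediately identify the two ring structures.
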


\begin{proof}
The morphism is defined locally, that is it is obtained by taking the global section of the morphism of sheaves
\begin{align*}
\mathcal C^*_X&\to  (\mathcal Z_X)_* \\
C^*(U)\ni \alpha &\mapsto \alpha \cdot [U] \in Z_*(U) \ .
\end{align*}
It thus suffices to show that this is an isomorphism on stalks, that is that for each $x\in X$ the induced morphism
\[
\mathcal C^*_{X,x}\to (\mathcal Z_{X,x})_*
\]
is an isomorphism. But this morphism can be naturally identified with the morphism
\[
\PP^*(\LC_x X)/\LPP(\LC_x X) \to Z_*(\LC_x X),\;\; \alpha \mapsto \alpha\cdot [\LC_x X] \ ,
\]
which is an isomorphism by Lemma \ref{lem:local isomorphism}. To finish the proof, we need to show that the morphism $\mathcal C^*_X \to (\mathcal Z_X)_*$ from above is a morphism of sheaves of rings, where the multiplication on $(\mathcal Z_X)_*$ is the tropical intersection product. Since $C^*_X$ is generated in degree $1$, it suffices to show that tropical intersection products respect intersections with piecewise linear functions, which has been done in \cite[Theorem 4.5]{FR10}.
\end{proof}

\begin{rem}
If one thinks of the elements of $C^*(X)$ as cohomology classes on $X$, of the elements of $Z_*(X)$ as homology classes on $X$, and of the product $\alpha\cdot [X]$ as the cap product of a cohomology class with the fundamental class on $X$, then the statement of Theorem \ref{thm:duality for manifolds2} is the direct analogue of classical Poincaré duality.
\end{rem}

We can use Theorem \ref{thm:duality for manifolds2} to define pull-back morphisms $f^*\colon Z_*(Y) \to Z_*(\vert A\vert)$ whenever $f\colon \vert A\vert \to Y$ is a morphism from the support of a tropical cycle $A$ to a tropical manifold $Y$. For this we note that a morphism $f\colon X\to Y$ between two rational polyhedral spaces is a continuous map that pulls back functions in $\Aff_Y$ to functions in $\Aff_X$. Any such morphism pulls back piecewise polynomial function on $Y$ to piecewise polynomial functions on $X$, so we obtain a pull-back morphism $f^{-1}\mathcal C_Y^*\to \mathcal C_X^*$ of sheaves of rings, which defines a ring morphism $f^*\colon C^*(Y)\to C^*(X)$ between the rings of tropical cocycles.

\begin{cor}
\label{cor:pullbacks of tropical cycles}
Let $f\colon X\to Y$ be a morphism from a rational polyhedral space $X$ to a tropical manifold $Y$, and let $A\in Z_*(X)$. Then there exists a unique pull-back morphism
\[
f^*\colon Z_*(Y)\to Z_*(X)
\]
that satisfies $f^*[Y]=A$ and
\[
f^*(c\cdot B)= f^*(c)\cdot f^*(B)
\]
for every $c\in C^*(Y)$ and $B\in Z_*(Y)$.
\end{cor}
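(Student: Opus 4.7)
The plan is to recognize this as a near-immediate consequence of Theorem \ref{thm:duality for manifolds2}. Since $Y$ is a boundaryless tropical manifold, that theorem gives an isomorphism of rings $C^*(Y)\xrightarrow{\sim} Z_*(Y)$, $\alpha\mapsto\alpha\cdot[Y]$. In particular, every $B\in Z_*(Y)$ admits a unique presentation $B=c_B\cdot[Y]$ with $c_B\in C^*(Y)$, and this presentation satisfies $c_{[Y]}=1$ as well as $c_{c\cdot B}=c\cdot c_B$ for every $c\in C^*(Y)$.

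For uniqueness, I would observe that any $f^*$ satisfying the two listed conditions is forced to act by $f^*(B)=f^*(c_B\cdot[Y])=f^*(c_B)\cdot f^*([Y])=f^*(c_B)\cdot A$, where $f^*(c_B)\in C^*(X)$ is the pull-back of cocycles discussed immediately before the corollary. This determines $f^*$ uniquely on all of $Z_*(Y)$.

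For existence, I would take the above formula as the definition: set $f^*(B)\coloneqq f^*(c_B)\cdot A$. The identity $f^*[Y]=A$ follows from $c_{[Y]}=1$ and the fact that the cocycle pull-back is a ring homomorphism (so $f^*(1)=1$), combined with $1\cdot A=A$. Compatibility with the cocycle action then reduces to the computation
\[
f^*(c\cdot B)=f^*(c\cdot c_B)\cdot A=f^*(c)\cdot f^*(c_B)\cdot A=f^*(c)\cdot f^*(B),
\]
which uses that $f^*\colon C^*(Y)\to C^*(X)$ is a ring homomorphism together with the associativity of the $C^*(X)$-action on $Z_*(X)$ established in \S\ref{subsec:cocycles on manifolds}.

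Since Theorem \ref{thm:duality for manifolds2} does all the heavy lifting, there is no real obstacle here; the only subtle point to keep track of is that the symbol $f^*$ is being used simultaneously for two different (but compatible) pull-backs, one on cocycles and the new one on cycles, and the proof essentially amounts to checking that they interact as required.
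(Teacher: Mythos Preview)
Your proposal is correct and follows essentially the same approach as the paper: both arguments use Theorem~\ref{thm:duality for manifolds2} to write every $B\in Z_*(Y)$ uniquely as $c_B\cdot[Y]$, derive uniqueness from the forced formula $f^*(B)=f^*(c_B)\cdot A$, and then take this formula as the definition to establish existence, checking compatibility via the ring homomorphism property of the cocycle pull-back.
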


\begin{proof}
First note that if $f^*\colon Z_*(Y)\to Z_*(X)$ satisfies the given conditions, then 
\[
f^*(c\cdot [Y])= f^*(c)\cdot f^*[Y]
\] 
for all $c\in C^*(Y)$. Since $C^*(Y)\ni c \mapsto c\cdot [Y] \in Z_*(Y)$ is surjective by Theorem \ref{thm:duality for manifolds2}, this shows that $f^*$ is unique. 

For existence, we can define $f^*$ as the composite
\[
Z_*(Y)\to C^*(Y)\xrightarrow{f^*} C^*(X)\xrightarrow{c\mapsto c\cdot A} Z_*(X) \ ,
\] 
where the first morphism is the inverse of $C^*(Y)\ni c \mapsto c\cdot [Y] \in Z_*(Y)$, which exists by Theorem \ref{thm:duality for manifolds2}. It follows directly from this definition that $f^*[Y]= A$. To check that it also satisfies the second condition, let $c\in C^*(Y)$ and $B\in Z_*(Y)$. Let $d\in C^*(Y)$ be the unique cocycle with $d\cdot [Y]= B$. Then we have 
\[
f^*(c\cdot B)=f^*\left(c\cdot (d\cdot [Y])\right)= f^*(c\cdot d)\cdot A= f^*(c)\cdot (f^*(d)\cdot A)=f^*(c)\cdot f^*(B) \ ,
\]
finishing the proof.
\end{proof}

\bibliography{Bib}

\bibliographystyle{amsalpha}

\end{document}